\newtheorem{thm}{Theorem}[section]
\newtheorem{lemma}[thm]{Lemma}
\newtheorem{prop}[thm]{Proposition}
\newtheorem{remark}[thm]{Remark}
\def\XXint#1#2#3{{\setbox0=\hbox{$#1{#2#3}{\int}$}
		\vcenter{\hbox{$#2#3$}}\kern-.5\wd0}}
\def \R {\mathbb R}
\def \N {\mathbb N}
\numberwithin{equation}{section}
\begin{document}
	
	\title{Well-posedness and scattering of odd solutions for the defocusing INLS in one dimension
 \thanks{This work is supported by NSFC under the grant Nos. (12301090, 12075102) } }
	\author{Zhi-Yuan Cui, Yuan Li\thanks{Corresponding author: li\_yuan@lzu.edu.cn (Yuan Li)} and Dun Zhao %\footnote{Z.C.:cuizhy20@lzu.edu.cn, , D.Z.:zhaod@lzu.edu.cn}
 \\{\small School of Mathematics and Statistics, Lanzhou University, Lanzhou 730000, China}}
	\date{}
	\maketitle
	
	\begin{abstract}
	%	We investigate the one-dimensional defocusing inhomogeneous nonlinear Schr\"{o}dinger (INLS) equation
%  $$i\partial_tu+\Delta u=|x|^{-b}|u|^{\alpha}u,$$
%where $0<b<1$ and $4-2b<\alpha<\infty$. For odd initial data in $H^1(\R)$, we obtain local and global well-posedness and energy scattering.
%To estimate the nonlinearity in one dimension, We utilizes the property of odd functions. Our proof of scattering is based on the concentration-compactness/rigidity method introduced by Kenig and Merle \cite{KM2006}, and the idea introduced by Miao¨CMurphy¨CZheng \cite{MMZ2021} for initial data living far from the origin.
%The proof is based on the property of odd functions and some arguments as Kenig and Merle \cite{KM2006} and Miao¨CMurphy¨CZheng \cite{MMZ2021}.
%{\color{blue}utilizes} the concentration-compactness/rigidity method introduced by Kenig and Merle \cite{KM2006}, and the idea introduced by Miao¨CMurphy¨CZheng \cite{MMZ2021} for initial data living far from the origin.
%Our results address a gap in the theory of well-posedness and energy scattering for the INLS equation in one dimension.

	We consider the defocusing inhomogeneous nonlinear Schr\"{o}dinger equation
  $i\partial_tu+\Delta u= |x|^{-b}|u|^{\alpha}u,$
    where $0<b<1$ and $0<\alpha<\infty$. This problem has been extensively studied for initial data in $H^1(\R^N)$ with $N\geq 2$.  However, in the one-dimensional setting, due to the difficulty in dealing with the singularity factor $|x|^{-b}$,  the well-posedness and scattering  in $H^1(\R)$  are scarce, and almost known results have been established in $H^s(\R)$ with $s<1$. In this paper, we focus on the odd initial data in $H^1(\R)$.  For this case, we establish local well-posedness for $0<\alpha<\infty$, as well as global well-posedness and scattering for $4-2b<\alpha<\infty$, which corresponds to the mass-supercritical  case. The key ingredient is the application of the one-dimensional Hardy inequality for odd functions to overcome the singularity induced by $|x|^{-b}$.  Our proof is based on the Strichartz estimates  and employs the concentration-compactness/rigidity method developed by Kenig-Merle  as well as the technique for handling initial data living far from the origin, as proposed by Miao-Murphy-Zheng.  Our results fill a gap in the theory of well-posedness and energy scattering for the inhomogeneous nonlinear Schr\"{o}dinger equation in one dimension.

% {\color{blue} We consider the one-dimensional defocusing inhomogeneous nonlinear Schr\"{o}dinger equation
%   $$i\partial_tu+\Delta u= |x|^{-b}|u|^{\alpha}u,$$
% where $0<b<1$ and $0<\alpha<\infty$. By considering odd initial data in $H^1(\R)$, our main results include: local well-posedness for $0<\alpha<\infty$; further global well-posedness and scattering for $4-2b<\alpha<\infty$. While extensively studied for initial data in $H^1(\R^N)$ with $N\geq 2$, the one-dimensional case presents significant challenges due to the singularity factor $|x|^{-b}$. Almost known results have been established in $H^s(\R)$ with $s<1$, leaving the $H^1(\R)$ theory largely undeveloped. The oddness assumption is crucial, as it allows us to employ the one-dimensional Hardy inequality to overcome the difficulty induced by $|x|^{-b}$. Our proof is based on the Strichartz estimates  and employs the concentration-compactness/rigidity method developed by Kenig-Merle  as well as the technique for handling initial data living far from the origin, as proposed by Miao-Murphy-Zheng. Our results fill a gap in the theory of well-posedness and energy scattering for the inhomogeneous nonlinear Schr\"{o}dinger equation in one dimension.   

% }
%(2006) (2021)

\vspace{0.2cm}
\noindent \textbf{Key words}: Inhomogeneous nonlinear Schr\"{o}dinger equation; One-dimensional Hardy inequality for odd functions; Well-posedness; Mass-supercritical; Scattering
		\vspace{0.2cm}
		
\noindent \textbf{MSC Classification}: 35Q55, 35B40	
	\end{abstract}

\section{Introduction and Main Results}
In this paper, we explore the one-dimensional defocusing inhomogeneous nonlinear Schr\"{o}dinger equation (INLS)
\begin{equation}\label{eqinls}
	\left\{\begin{aligned}
	&i\partial_tu+\Delta u=|x|^{-b}|u|^{\alpha}u,\\
	&u|_{t=0}=u_0\in H^1(\R),
	\end{aligned}\right.
	\end{equation}
where $u=u(t,x)$ is a complex-valued function defined in time-space $\R\times\R$, with parameters $0<b<1$ and $0<\alpha<\infty$. This kind of problem \eqref{eqinls} arises in nonlinear optical systems with spatially dependent interactions, see, for example, \cite{2007PRL}.  For more detailed physical insights, we refer to the works of Gill \cite{Gill2000}, and Liu and Tripathi \cite{LT1994}.

The equation \eqref{eqinls} is invariant under the scaling transformation defined by
$$u_\lambda(t,x):=\lambda^\frac{2-b}{\alpha}u(\lambda^2t,\lambda x),$$
where $\lambda>0$. It preserves the norm in the homogeneous Sobolev space $\dot{H}^{s_c}(\R)$, as given by:
\begin{align}\label{def:scaling}   \|u_\lambda(0,\cdot)\|_{\dot{H}^{s_c}}=\|u_0\|_{\dot{H}^{s_c}},\qquad s_c:=\frac{1}{2}-\frac{2-b}{\alpha}.
\end{align}
 If $s_c<0$, problem \eqref{eqinls} is referred to as mass sub-critical; if $s_c=0$, it is called mass critical;  if $s_c>0$, it is mass super-critical. In particular, it is always energy subcritical in the one-dimensional since $s_c<\frac{1}{2}$.

Equation \eqref{eqinls} also has the following conserved quantities:

%\hspace{0.7cm}
Mass:
\begin{equation}\label{M}
    M[u(t)]=\int |u(t,x)|^2dx=M[u_0];
    \end{equation}

%\hspace{0.7cm}
Energy:
\begin{equation}\label{E}
  E[u(t)]=\frac{1}{2}\int |\nabla u(t,x)|^2dx+\frac{1}{\alpha+2}\int |x|^{-b}|u(t,x)|^{\alpha+2}dx=E[u_0].
\end{equation}

When $b=0$, equation \eqref{eqinls} becomes the classical nonlinear Schr\"odinger equation, which has been extensively studied in the literature. For more details, we refer to \cite{FXC2011, HR2008, DHR2008, CKSTT2008, Visan2007, RV2007, Dodson2019, Cz2003, Vis2008, Bourgain1999} and the references therein. In contrast, when $b\neq0$,  much less is known about equation \eqref{eqinls}, and it has garnered increasing attention in recent years. In particular, results of well-posedness  and scattering have been obtained by many authors. For some relevant studies, see \cite{Farah2016, GS2008,FG2017,Guz2017, Dinh2019,FG2020,AT2021,Campos2021,Dinh2021-1,Dinh2021-2,Dinh2021-3,AK2021,KLS2021,MMZ2021,CC2022,CFGM2022} and the references therein.

 We briefly review some developments for well-posedness in one dimension.  By using the energy method, Genoud-Stuart\cite{GS2008} and Farah \cite{Farah2016} obtained the well-posedness of equation \eqref{eqinls} for $4-2b<\alpha<\infty$ and $0<b<1$ in $H^1(\R)$ (also for $\frac{4-2b}{N}<\alpha<\frac{4-2b}{N-2}$ and $0<b<2$ in $H^1(\R^{N})$ for $N\geq2$).  Their results show that solutions satisfy $u\in C((-T,T),H^1(\R))$. 
% However, this result is hard to apply to the scattering problems of \eqref{eqinls}.  
 Other one-dimensional well-posedness results based on Kato's method (using Strichartz estimates), such as in \cite{Guz2017,AT2021,AK2021}, have focused on the initial data in $H^s(\R)$, where $s<1$. The restriction $s<1$ arises due to the challenge of handling the singular factor $|x|^{-b}$. More precisely, Guzm\'an \cite{Guz2017} explored the well-posedness in $L^2(\R)$  for $0<b<1$ and $\alpha<4-2b$; The author also investigated the well-posedness in $H^s(\R)$ for $\max\{0,s_c\}<s\leq \frac{1}{2}$, $0<b<\frac{1}{3}$ and $0<\alpha<\frac{4-2b}{1-2s}$. Aloui and Tayachi \cite{AT2021} established local well-posedness in $H^s(\R)$ for $0\leq s <\frac{1}{2}$,  $0<b<1-2s$ and $0<\alpha\leq \frac{4-2b}{1-2s}$.  Subsequently, An and Kim \cite{AK2021} analyzed local well-posedness in $H^s(\R)$ for $0\leq s <1$,  $0<b<1-s$ and $0<\alpha< \alpha_s$, where $\alpha_s=\frac{4-2b}{1-2s}$ if $0\leq s<\frac{1}{2}$, or $\alpha_s=\infty$ if $1>s\geq\frac{1}{2}$. These works provide that solutions belong to $L^q_{loc}((-T,T),W^{s,r}(\R))$ for any $L^2$-admissible pair $(q,r)$. Furthermore, Dinh points out that in contrast to the case $N\geq 2$, where solutions satisfy $u\in L^q_{loc}((-T,T),W^{1,r}(\R^N))$, a similar result cannot be expected in the one-dimensional case when using Strichartz estimates (see Remark 3.2 or the sentences before Remark 1.1 in \cite{Dinh2021-1}).

For the scattering results of INLS in $H^1(\R^N)$, as far as we know, only the cases for $N\geq 2$ have been studied, and for the case of $N=1$, the related result is still lacking in the literature. To our knowledge, the energy scattering for the focusing INLS was first established by Farah and Guzm\'an \cite{FG2017} with $0<b<\frac{1}{2},~\alpha=2,~N=3$ and radial data.  For general results on the scattering in $H^1$, one can see \cite{Dinh2021-2,FG2020,Campos2021,CC2022,MMZ2021,Dinh2021-3,CFGM2022}  and the references therein.  In particular, for $N\geq2,~\frac{4-2b}{N}<\alpha<\frac{4-2b}{N-2}$ and $0<b<\min\{2,\frac{N}{2}\}$, \cite{CFGM2022,Dinh2021-3} obtained the energy scattering in $H^1$ without the radial restriction. So far, this range of $(N,\alpha, b)$ is the widest. For the defocusing case, we refer to \cite{Dinh2019, Dinh2021-2}; for the energy critical case, see  \cite{GM2021,CHL2020,Park2024}. In addition, when $N=1$,  Dinh \cite{Dinh2021-1} obtained a decay estimate for equation \eqref{eqinls} using the pseudoconformal conservation law in the weighted $L^2$-space.

In this paper, we will investigate the well-posedness and scattering  with odd initial data in $H^1(\mathbb{R})$. Under this condition, the solution $u(t)$ to \eqref{eqinls} will also be odd. Specifically, we have $u(t,0)\equiv0$. Hence, using the one-dimensional Hardy inequality for odd functions, we can deal with the singularity term $|x|^{-(b+1)}$, which comes from the first derivative of the nonlinear term corresponding to the derivative part of $H^1$ (see \eqref{est:line:3} and \eqref{eq th2}). Moreover, under the oddness assumption, our results answer the left issues in Dinh's works \cite{Dinh2019} and \cite{Dinh2021-1} (see the comments of Theorem 3 and 4 in \cite{Dinh2019} and Remark 3.2 in \cite{Dinh2021-1}).

We mention that while \cite{Farah2016} establishes the well-posedness, their proof avoids Strichartz estimates. Consequently, the question of whether the solutions to \eqref{eqinls} belong to $L^q_{loc}((-T,T),W^{1,r}(\R))$ for any $L^2$-admissible pair $(q,r)$ remains unknown. This integrability in time-space Lebesgue spaces plays an important role in proving the energy scattering for INLS. Therefore, we will establish a well-posedness estimate using Kato's method. Our first result can be stated as:

\begin{thm}\label{T1}
Let $0<b<1$ and $0<\alpha<\infty$.

1) (Local well-posedness) Let $u_0\in H^1(\R)$ be an odd function. Then, there exists a time $T=T(u_0)>0$ and a unique odd solution $u(t)$ to \eqref{eqinls} that satisfies
   $$u\in C((-T,T),H^1(\R))\cap L^q_{loc}((-T,T),W^{1,r}(\R))$$
for any $L^2$-admissible pair $(q,r)$. Moreover, the continuous dependence on the initial data is valid.

2) (Global theory of small data) Assume that $4-2b<\alpha<\infty$. Let $u_0\in H^1(\R)$ be an odd function such that $\|u_0\|_{H^1}\leq A$ for some constant $A>0$. Then, there exists  $\delta=\delta(A)>0$ such that if $\|e^{it\Delta}u_0\|_{S(\dot{H}^{s_c})}<\delta$, the corresponding solution to \eqref{eqinls} with initial data $u_0$ is global in time and satisfies
  $$\|u\|_{S(\dot{H}^{s_c})}\leq 2\|e^{it\Delta}u_0\|_{S(\dot{H}^{s_c})},\quad \|u\|_{S(L^2)}+\|\nabla u\|_{S(L^2)}\leq 2C\|u_0\|_{H^1}.$$

3) (Scattering condition) Assume that $4-2b<\alpha<\infty$. Let $u$ be a global odd solution to \eqref{eqinls}. If
  $$\|u\|_{L_t^\infty(\R,H_x^1)}\leq A \quad \text{and} \quad \|u\|_{S(\dot{H}^{s_c})}<\infty,$$
then the solution $u$ scatters in $H^1(\R)$ in both directions, i.e., there exist $u_{\pm}$ in $H^1(\R)$ such that
 $$\lim_{t\rightarrow\pm \infty}\|u(t)-e^{it\Delta}u_{\pm}\|_{H^1(\R)}=0.$$

4) (Existence of wave operator) Suppose $4-2b<\alpha<\infty$ and let $\phi\in H^1(\R)$ be an odd function. Then there exists $u_0^+\in H^1(\R)$ such that the corresponding solution $u$ to \eqref{eqinls} is global in $H^1(\R)$ and satisfies
$$M[u]=M[\phi],\qquad E[u]=\frac{1}{2}\|\nabla\phi\|^2_{L^2},\qquad \lim_{t\rightarrow+\infty}\|u(t)-e^{it\Delta}\phi\|_{H^1}=0.$$

\end{thm}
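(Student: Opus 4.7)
The approach is the standard construction of a wave operator via a fixed-point argument on the ``final-data'' Duhamel equation
\begin{equation*}
u(t) = e^{it\Delta}\phi + i\int_t^{+\infty} e^{i(t-s)\Delta}\bigl(|x|^{-b}|u|^{\alpha}u\bigr)(s)\,ds,
\end{equation*}
posed on a half-line $[T,+\infty)$ with $T$ chosen large. The plan is: (i) solve this equation on $[T,+\infty)$ in a complete metric space of odd functions carrying Strichartz-type bounds, producing $u(t)$ that asymptotes to $e^{it\Delta}\phi$ in $H^1$; (ii) extend $u$ backward in time via Theorem~\ref{T1}(1) and conservation laws to a global odd solution, and set $u_0^+:=u(0)$; (iii) verify the conservation identities by passing to the limit $t\to+\infty$.

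For step (i), I would iterate on the set $\mathcal B_T$ of odd functions $u\in C([T,+\infty),H^1(\R))$ satisfying $\|u\|_{S(\dot H^{s_c};[T,\infty))}\le 2\eta$ and $\|u\|_{S(L^2)}+\|\nabla u\|_{S(L^2)}\le 2C\|\phi\|_{H^1}$, endowed with the $S(\dot H^{s_c})$-metric. Since $\phi\in H^1(\R)$ and $4-2b<\alpha$, we have $e^{it\Delta}\phi\in S(\dot H^{s_c})$ globally, so the tail norm $\eta:=\|e^{it\Delta}\phi\|_{S(\dot H^{s_c};[T,\infty))}$ tends to $0$ as $T\to+\infty$ and can be made as small as needed. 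Oddness is preserved along the iteration, since $e^{it\Delta}\phi$ is odd whenever $\phi$ is and the Duhamel map sends odd inputs to odd outputs. The required Strichartz and nonlinear estimates are exactly those established for parts (2)--(3); the crucial point is the use of the one-dimensional Hardy inequality for odd functions to absorb the singular factor $|x|^{-(b+1)}$ arising from $\nabla(|x|^{-b}|u|^\alpha u)$. A standard contraction then yields an odd solution $u\in C([T,+\infty),H^1)$ with $\|u(t)-e^{it\Delta}\phi\|_{H^1}\to 0$ as $t\to+\infty$.

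Step (ii) is routine: $u(T)$ is an odd $H^1$ datum, so Theorem~\ref{T1}(1) extends $u$ on a maximal left interval; the defocusing energy is a sum of non-negative terms, so $\|\nabla u(t)\|_{L^2}^2\le 2E[u]$ together with mass conservation gives a global $H^1$ bound, ruling out finite-time blow-up and producing a global odd solution. The mass identity follows from $L^2$-convergence: $M[u_0^+]=M[u(t)]\to M[e^{it\Delta}\phi]=M[\phi]$. For the energy, the kinetic part obeys $\|\nabla u(t)\|_{L^2}^2\to\|\nabla\phi\|_{L^2}^2$ by $H^1$-convergence, so the claim reduces to showing $\lim_{t\to+\infty}\int|x|^{-b}|u(t)|^{\alpha+2}\,dx=0$. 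Splitting $u=e^{it\Delta}\phi+w$ with $\|w(t)\|_{H^1}\to 0$, I would bound the $w$-contribution by the weighted Gagliardo--Nirenberg/odd-Hardy inequality used elsewhere in the paper, and treat the $e^{it\Delta}\phi$ contribution by a density argument: approximate $\phi$ in $H^1$ by Schwartz odd functions $\phi_n$, for which the one-dimensional dispersive estimate forces $\|e^{it\Delta}\phi_n\|_{L^{\alpha+2}}\to 0$, and close using the weighted estimate on the error.

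The main obstacle is step (i): closing the contraction in a function space carrying one full spatial derivative while absorbing the singular weight $|x|^{-(b+1)}$ generated by differentiating the nonlinearity. This is the same difficulty that drives the entire paper, handled precisely via the one-dimensional Hardy inequality for odd functions; once the nonlinear estimates from parts (1)--(2) are in place, the rest of the wave-operator construction follows by standard arguments for defocusing problems.
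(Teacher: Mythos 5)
Your proposal addresses only part (4) of the theorem. For that part, your route is essentially the paper's: the paper solves the final-state problem by a contraction for $G(w)(t)=i\int_t^{+\infty}e^{i(t-s)\Delta}\bigl(|x|^{-b}|w+e^{it\Delta}\phi|^{\alpha}(w+e^{it\Delta}\phi)\bigr)\,ds$ on $I_T=[T,+\infty)$ in a ball of odd functions measured in $S(I_T,\dot H^{s_c})\cap S(I_T,L^2)\cap \nabla^{-1}S(I_T,L^2)$, using the smallness of $\|e^{it\Delta}\phi\|_{S(I_T,\dot H^{s_c})}$ for $T$ large and the nonlinear estimates of Lemma~\ref{lem nlest} (whose derivative term is controlled precisely by the odd Hardy inequality, Lemma~\ref{Lem Hardy}); your iteration on $u$ rather than on $w=u-e^{it\Delta}\phi$ is a cosmetic difference. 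Your steps (ii) and (iii) (backward global extension via the sign of the defocusing energy, and the mass/energy identities via decay of the potential energy along $e^{it\Delta}\phi$) are consistent with the standard arguments the paper invokes by reference.

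The genuine gap is that parts (1)--(3) are not proved but are leaned on (``the required Strichartz and nonlinear estimates are exactly those established for parts (2)--(3)''), and part (1) in particular cannot be dispatched this way. Part (1) is asserted for the full range $0<\alpha<\infty$, whereas the scaling-critical estimates \eqref{est:line:1}--\eqref{est:line:3} that drive parts (2)--(4) require $4-2b<\alpha<\infty$ (so that $s_c>0$ and the exponents $\hat a,\tilde a,\hat q,\hat r$ are admissible). The paper closes the local contraction for all $\alpha>0$ with a separate, time-localized nonlinear estimate (Lemma~\ref{lem Tnlest}), which trades scaling invariance for a factor $T^{\theta}$ with $\theta\in(\tfrac12,\tfrac34)$ obtained by H\"older in time against a pair $(q_0,r_0)\in\mathcal A_0$; smallness then comes from shrinking $T$ rather than from smallness of a critical Strichartz norm. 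Without this (or an equivalent device), your framework gives local well-posedness only in the mass-supercritical regime. Parts (2) and (3) are indeed standard once Lemma~\ref{lem nlest} is available, but a complete proof of the statement should at least record the contraction in the small-data space for (2) and the Cauchy-criterion argument $u_{\pm}=u_0+i\int_0^{\pm\infty}e^{-is\Delta}(|x|^{-b}|u|^{\alpha}u)(s)\,ds$ for (3), both of which again use \eqref{est:line:1}--\eqref{est:line:3} and hence the oddness.
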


{\bf  Remark:} Similar results can be established for the one dimensional focusing INLS:
\begin{equation*}%\label{eqfinls}
        i\partial_tu+\Delta u=-|x|^{-b}|u|^{\alpha}u.
\end{equation*}
%with $0<b<1$ and $0<\alpha<\infty$.

To achieve the scattering solution for the INLS  \eqref{eqinls} in one dimension, a primary challenge is the lack of decay estimates in $H^1(\R)$. However, the decay estimates can be obtained by the odd assumption, see our work \cite{CLZ2024}. With the aid of these decay estimates, along with techniques introduced by Miao, Murphy, and Zheng \cite{MMZ2021}, we can ultimately establish the scattering result as following:

\begin{thm}\label{T2}
Assume that $0<b<1 $ and $4-2b<\alpha<\infty$. If $u_0\in H^1(\R)$ is an odd function, then the corresponding odd solution $u(t)$ to \eqref{eqinls} is global and scatters in $H^1(\R)$.
\end{thm}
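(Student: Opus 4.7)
The plan is to prove Theorem \ref{T2} by first settling global existence and then running the Kenig--Merle concentration-compactness/rigidity program in the odd subclass. Global existence is almost immediate: since the equation is defocusing, $E[u_0]\geq 0$ and mass/energy conservation together with the one-dimensional Gagliardo--Nirenberg-type control of the potential term give a uniform bound on $\|u(t)\|_{H^1}$, so the local solution from Theorem \ref{T1}(1) extends to all times. By Theorem \ref{T1}(3) it therefore suffices to show $\|u\|_{S(\dot{H}^{s_c})}<\infty$ for every global odd solution with $\|u\|_{L^\infty_tH^1_x}\leq A$.

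Second, I would argue by contradiction: define $E_c$ as the infimum of $E[u_0]$ (or of a Kenig--Merle-type functional combining mass and energy to be scale-invariant at $s_c$) over odd initial data for which scattering fails, and suppose $E_c<\infty$. By Theorem \ref{T1}(2), small data scatters, so $E_c>0$. Taking a minimizing sequence of odd data $u_{0,n}$ and applying a linear profile decomposition to $e^{it\Delta}u_{0,n}$ in $H^1(\mathbb{R})$, one writes
\[
e^{it\Delta}u_{0,n}=\sum_{j=1}^{J}e^{i(t-t_n^j)\Delta}\phi^j(\cdot-x_n^j)+W_n^J,
\]
with the remainder having small $\dot{H}^{s_c}$-Strichartz norm as $J,n\to\infty$. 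Oddness of $u_{0,n}$ forces the profiles to occur either in odd pairs (when $|x_n^j|\to\infty$, a profile at $x_n^j$ is matched with its mirror at $-x_n^j$) or to be individually odd with $x_n^j\equiv 0$. Using Pythagorean decompositions of $\|\cdot\|_{L^2}$, $\|\nabla\cdot\|_{L^2}$ and of the energy (where the potential-energy decoupling requires the Hardy inequality for odd functions together with the vanishing of $|x|^{-b}$ on supports shifted far from the origin), combined with a stability/perturbation lemma for \eqref{eqinls}, I reduce to the presence of a single non-scattering profile.

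Third, the profiles with $|x_n^j|\to\infty$ are handled by the Miao--Murphy--Zheng device: on shifted supports $|x|^{-b}\to 0$, so the associated nonlinear profile approximately solves the translation-invariant defocusing NLS $i\partial_tv+\Delta v=|v|^\alpha v$ on $\mathbb{R}$, which scatters in $H^1$ in the mass-supercritical regime $\alpha>4$ (note $\alpha>4-2b>4-2$ so one must still verify $\alpha>4$ for the classical NLS to apply, otherwise use a perturbation off the known NLS scattering; in any case the potential-free limit is tractable). Convergence of the perturbation in the relevant Strichartz spaces is controlled by the $H^1$ decay estimates from \cite{CLZ2024}. Consequently such far-profiles cannot be the obstruction, so the remaining profile is intrinsically odd and centered at the origin. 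Extracting the nonlinear solution of this profile yields the critical element $u_c$: an odd, global, $H^1$-bounded solution with $\|u_c\|_{S(\dot{H}^{s_c})}=\infty$ whose orbit $\{u_c(t):t\in\mathbb{R}\}$ is pre-compact in $H^1(\mathbb{R})$ (oddness pins the translation parameter to zero, removing the usual spatial-translation modulation).

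Finally, I rule out $u_c$ by a localized virial/Morawetz argument. Taking a weight $a_R(x)$ that is a smooth truncation of $x$ with $a_R'\to 1$ and $-a_R'''\geq 0$, the Morawetz functional $V_R(t)=2\mathrm{Im}\int a_R(x)\overline{u_c}\,\partial_x u_c\,dx$ satisfies
\[
\tfrac{d}{dt}V_R(t)=\int 4a_R'|\partial_x u_c|^2-\int a_R'''|u_c|^2+\tfrac{2\alpha}{\alpha+2}\int a_R'|x|^{-b}|u_c|^{\alpha+2}+\tfrac{2b}{\alpha+2}\int a_R\,x|x|^{-b-2}|u_c|^{\alpha+2},
\]
and the new singular term $|x|^{-(b+1)}$ on the right is precisely where the odd Hardy inequality $\||x|^{-1}u\|_{L^2}\lesssim\|\partial_x u\|_{L^2}$ enters to produce a coercive lower bound. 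Integrating on $[0,T]$, using $|V_R|\lesssim R\|u_c\|_{L^2}\|\nabla u_c\|_{L^2}$ uniformly in $t$, and invoking compactness of the orbit to prevent mass escape at infinity yields a bound
\[
\int_0^T\!\!\int|x|^{-b}|u_c|^{\alpha+2}\,dx\,dt\lesssim R,
\]
with a coercive lower bound of order $T$, forcing $u_c\equiv 0$ and contradicting $E_c>0$. The principal obstacles I expect are (i) obtaining clean decoupling of the potential energy in the profile decomposition given the inhomogeneity $|x|^{-b}$, and (ii) producing the coercive Morawetz bound in one dimension, where the natural weight is borderline and the $|x|^{-(b+1)}$ term created by differentiating the inhomogeneity must be absorbed — both points hinge on systematic use of the odd Hardy inequality and the decay estimates of \cite{CLZ2024}.
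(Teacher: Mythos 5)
Your overall roadmap coincides with the paper's: global existence from the defocusing conservation laws (as in \cite{Farah2016}), reduction to a finite $S(\dot{H}^{s_c})$ bound via Theorem \ref{T1}(3), extraction of a critical element through the odd linear profile decomposition with mirrored profile pairs $\psi^j(\cdot-x_n^j)-\psi^j(-\cdot-x_n^j)$, and a Morawetz-flavoured rigidity step. The one substantive place where your argument goes wrong is the treatment of the far profiles $|x_n^j|\to\infty$. You assert that the associated nonlinear profile approximately solves the translation-invariant defocusing NLS $i\partial_tv+\Delta v=|v|^{\alpha}v$ and then worry about whether $\alpha>4$ so that classical NLS scattering applies. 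This is not what happens: on supports shifted to distance $|x_n^j|\to\infty$ from the origin, the coefficient $|x|^{-b}$ tends to zero, so the nonlinearity \emph{vanishes} in the limit and the correct comparison dynamics is the \emph{free} Schr\"odinger flow, not the homogeneous NLS. This is exactly how Proposition \ref{prop far} proceeds: the approximate solution $\tilde{v}_{n,T}$ is a cut-off linear evolution $\chi_n(\cdot-x_n)e^{it\Delta}P_n\psi(\cdot-x_n)$ minus its mirror, and the entire nonlinear term is dumped into the error $e_{n,T}$, whose smallness comes from $\||x|^{-b}\|_{L^\infty(|x|>|x_n|/4)}\lesssim|x_n|^{-b}\to0$. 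Your version would import a scattering input for the homogeneous NLS that is unavailable (without further work) in part of the admissible range, since $4-2b<\alpha$ does not imply $\alpha>4$ when $b>0$; the free-evolution comparison avoids this entirely and is the point of the Miao--Murphy--Zheng device here.

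On the rigidity step you take a genuinely different (but workable) route from the paper. You run a truncated virial/interaction-Morawetz estimate directly on the critical element $u_c$, pitting an upper bound of order $R$ against a coercive lower bound of order $T$ obtained from pre-compactness of the orbit. The paper instead quotes the decay estimate of Lemma \ref{lem decay} (itself proved in \cite{CLZ2024} by a Virial--Morawetz identity with weight $x/(1+|x|)$), combines it with tightness of the compact orbit (Lemma \ref{tightness}) and mass conservation to force $u_c\equiv0$ in Lemma \ref{vanish}. The two are morally the same Morawetz mechanism; the paper's version outsources the quantitative estimate to the decay lemma and keeps the rigidity argument soft. One small correction to your computation: with a weight $a_R(x)\approx x$ near the origin, the term produced by differentiating $|x|^{-b}$ is $a_R\,x|x|^{-b-2}|u_c|^{\alpha+2}\approx|x|^{-b}|u_c|^{\alpha+2}$, which is no more singular than the potential energy itself, so the odd Hardy inequality is not actually needed at that point of the Morawetz identity; where it is indispensable is in the nonlinear estimates \eqref{est:line:3} and \eqref{eq th2} underlying the well-posedness, stability, and profile-approximation machinery.
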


We point out that in the literature on energy scattering, the parameter $b$  has always been restricted to $0<b<\min\{2,\frac{N}{2}\}$ (see \cite{Campos2021,CFGM2022,CC2022,Dinh2021-2,Dinh2021-3}), which is not consistent with the existence condition on $b$ ($0<b<\min\{2,N\}$, as presented in \cite{Farah2016}). In previous works, the restriction stems from the techniques (such as Hardy inequality or H\"{o}lder's inequality in Lorentz spaces) used to handle the singularity arising from the term $|x|^{-b}$. In Theorem \ref{T2}, the condition $0<b<1$ aligns with the existence criteria  presented in \cite{Farah2016}. In fact, by the Lemma \ref{Lem Hardy}, the oddness ensures that the nonlinear estimates hold for $0<b<1$. In some sense, for nonlinearities of the form $|x|^{-b}|u|^\alpha u$, the $H^1$-solutions with $u(t,0)=0$ mitigate the singularity arising from the factor $|x|^{-b}$ at the origin.

This paper is organized as follows: In Section 2, we introduce some useful lemmas and inequalities. Section 3 is dedicated to the proof of Theorem \ref{T1}. In Section 4, we establish a stability result. Section 5 focuses on constructing the minimal blowup solution, which is essential to our argument. Finally, in Section 6, we present the proof of Theorem \ref{T2}.

Here we list some notations that will be used throughout this paper:

- We write $A\lesssim B$ to indicate that there exists a constant $C>0$ such that $A\leq CB$, where $C$ does not depend on $A$ and $B$. If the constant $C$ depends on a parameter $M$, we use $A\lesssim_M B$.

- The notation $\nabla f$ refers to the derivative of the function $f$ with respect to the spatial variable $x$.

- We use the standard notation for time-space Lebesgue spaces $L^q_tL^r_x$. Furthermore, we denote the homogeneous Sobolev spaces as $\dot{W}^{s,r}$ and the inhomogeneous Sobolev spaces as $W^{s,r}$. In particular, when $r=2$, we use the notations $\dot{W}^{s,2}=\dot{H}^s$ and $W^{s,2}=H^s$.  Additionally, when $s = 0$, it follows that $\dot{H}^0=L^2$.  If necessary, subscripts may be used to clarify which variable is being referenced.

- The notation $p'$ denotes the H\"{o}lder conjugate of $p\geq1$.

\section{Preliminaries}

~~~~ In this section, we give some basic lemmas and inequalities.

We employ the standard Littlewood-Paley projections $P_{\leq N}$ which are defined as Fourier multipliers. These multipliers are associated with a smooth cutoff to the region $\{|\xi|\leq N\}$.  In particular, we require some fundamental results, such as the Bernstein estimate
\begin{equation}
    \label{eq Bern}
\||\nabla|^sP_{\leq N}f\|_{L^2_x}\leq N^s\|f\|_{L^2_x} 
\end{equation}
and the fact that $P_{\leq N}f\rightarrow f$ strongly in $H^1$ as $N\rightarrow\infty$.

We also recall the Strichartz estimates associated with the free Schr\"odinger operator. For $s\in (-\frac{1}{2},\frac{1}{2})$, a pair $(q,r)$ is called $\dot{H}^s$-admissible if it satisfies $\frac{2}{q}=\frac{1}{2}-\frac{1}{r}-s$ with $\frac{2}{1-2s}\leq r<\infty$.  Let $I\subset\R$ and $\mathcal{A}_s$ be the set of $\dot{H}^s$-admissible pairs. We introduce the Strichartz norm
$$\|u\|_{S(I,\dot{H}^s)}=\sup_{(q,r)\in \mathcal{A}_s}\|u\|_{L^q_t(I)L^r_x(\R)}.$$
We also define the dual Strichartz norm by
$$\|u\|_{S'(I,\dot{H}^{-s})}=\inf_{(q,r)\in \mathcal{A}_{-s}}\|u\|_{L^{q'}_t(I)L^{r'}_x(\R)}.$$
When $I=\R$, we simply denote $\|u\|_{S(\dot{H}^s)}:=\|u\|_{S(I,\dot{H}^s)}$ and $\|u\|_{S'(\dot{H}^{-s})}:=\|u\|_{S'(I,\dot{H}^{-s})}$.

We have the following Strichartz estimates (see \cite{Foschi2005,KT1998,Cz2003}).
\begin{lemma}\label{str est}
Assume that $s\geq0$. Let $f\in \dot{H}^s$ and $g\in L^{q'}_t(I)L^{r'}_x(\R)$ for some $(q,r)\in\mathcal{A}_{-s}$, then the following hold:
\begin{equation*}
      \begin{split}
          \|e^{it\Delta}f\|_{S(\dot{H}^s)}&\lesssim\|f\|_{\dot{H}^s},\\
      \left\|\int^t_0e^{i(t-t')\Delta}g(t')dt'\right\|_{S(I,\dot{H}^s)}&\lesssim\|g\|_{S'(I,\dot{H}^{-s})}.
       \end{split}
   \end{equation*}
\end{lemma}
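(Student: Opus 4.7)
The plan is to deduce both bounds from the standard $L^2$-based Strichartz estimates via Sobolev embedding. For the homogeneous bound, I would start with the 1D dispersive inequality $\|e^{it\Delta}f\|_{L^\infty_x}\lesssim |t|^{-1/2}\|f\|_{L^1_x}$ together with the conservation identity $\|e^{it\Delta}f\|_{L^2_x}=\|f\|_{L^2_x}$. A standard $TT^*$/Keel--Tao argument (with the endpoint $r=\infty$ excluded) then yields
\[
\|e^{it\Delta}f\|_{L^q_t L^{\tilde r}_x}\lesssim \|f\|_{L^2_x}
\]
for every $L^2$-admissible pair, i.e.\ $\frac{2}{q}+\frac{1}{\tilde r}=\frac{1}{2}$ with $2\le \tilde r<\infty$.

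To pass from $L^2$ to $\dot H^s$, given an $\dot H^s$-admissible pair $(q,r)$ as in the statement, I would choose $\tilde r$ by $\frac{1}{\tilde r}=\frac{1}{r}+s$. Then $(q,\tilde r)$ is $L^2$-admissible, since
\[
\tfrac{2}{q}+\tfrac{1}{\tilde r}=\tfrac{2}{q}+\tfrac{1}{r}+s=\tfrac{1}{2},
\]
and the constraints $\frac{2}{1-2s}\le r<\infty$, $0\le s<\frac12$ guarantee $2\le \tilde r<\infty$ and, in particular, $0<s<\frac{1}{\tilde r}$, so that the one-dimensional Sobolev embedding $\dot W^{s,\tilde r}(\mathbb R)\hookrightarrow L^r(\mathbb R)$ applies. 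Combining the $L^2$-Strichartz bound with this embedding and the commutation $|\nabla|^s e^{it\Delta}=e^{it\Delta}|\nabla|^s$ gives
\[
\|e^{it\Delta}f\|_{L^q_t L^r_x}\lesssim \||\nabla|^s e^{it\Delta}f\|_{L^q_t L^{\tilde r}_x}=\|e^{it\Delta}|\nabla|^s f\|_{L^q_t L^{\tilde r}_x}\lesssim \||\nabla|^s f\|_{L^2}=\|f\|_{\dot H^s}.
\]
Taking the supremum over $\dot H^s$-admissible pairs produces the homogeneous bound.

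For the retarded estimate, I would first establish the ``untruncated'' bound $\bigl\|\int_{\mathbb R}e^{i(t-t')\Delta}g(t')\,dt'\bigr\|_{S(\mathbb R,\dot H^s)}\lesssim \|g\|_{S'(\mathbb R,\dot H^{-s})}$ by dualising the homogeneous estimate at level $\dot H^{-s}$ and composing with the one at level $\dot H^s$ (noting that the $\dot H^{-s}$-admissibility condition is precisely the dual of $\dot H^s$-admissibility in the sense used for the $S'$-norm). Then the Christ--Kiselev lemma, valid because the relevant exponents satisfy $q>q'$ on any finite admissible pair, removes the ``$t'<t$'' truncation and yields the stated inequality on arbitrary intervals $I\subset\mathbb R$.

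The main subtlety is not any one of these steps in isolation but making sure the exponent bookkeeping is consistent: the restriction $\frac{2}{1-2s}\le r<\infty$ is exactly what is needed to keep $\tilde r$ in the open Keel--Tao range $[2,\infty)$ and to keep the Sobolev embedding $\dot W^{s,\tilde r}\hookrightarrow L^r$ non-endpoint; similarly one must verify that the pair produced for the dual estimate lies in $\mathcal A_{-s}$. Once these verifications are made, the rest is a routine assembly of standard ingredients, which is why the lemma is typically quoted from \cite{Foschi2005,KT1998,Cz2003} without a fresh proof.
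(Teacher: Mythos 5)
Your derivation of the first (homogeneous) estimate is correct and is the standard one: with $\frac{1}{\tilde r}=\frac1r+s$ the pair $(q,\tilde r)$ lands in the non-endpoint $L^2$-admissible range, and the Sobolev embedding $\dot W^{s,\tilde r}(\R)\hookrightarrow L^r(\R)$ together with the commutation $|\nabla|^se^{it\Delta}=e^{it\Delta}|\nabla|^s$ finishes it. (The paper itself offers no proof of the lemma; it simply quotes \cite{Foschi2005,KT1998,Cz2003}.)

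The second (retarded) estimate, however, contains a genuine gap. You propose to obtain the untruncated bound by writing $\int_{\R}e^{i(t-t')\Delta}g\,dt'=T(T^*g)$ and invoking ``the dual of the homogeneous estimate at level $\dot H^{-s}$'', i.e. $\|T^*g\|_{\dot H^{s}}\lesssim\|g\|_{L^{q'}_tL^{r'}_x}$ for $(q,r)\in\mathcal{A}_{-s}$. By duality this is equivalent to the homogeneous estimate $\|e^{it\Delta}h\|_{L^q_tL^r_x}\lesssim\|h\|_{\dot H^{-s}}$ for $(q,r)\in\mathcal{A}_{-s}$, and for $s>0$ that estimate is \emph{false} for every such pair: taking $h_v(x)=e^{ivx}h(x)$ with $\hat h$ supported in the unit ball, Galilean invariance gives $\|e^{it\Delta}h_v\|_{L^q_tL^r_x}=\|e^{it\Delta}h\|_{L^q_tL^r_x}$ while $\|h_v\|_{\dot H^{-s}}\sim|v|^{-s}\|h\|_{L^2}\to0$ as $|v|\to\infty$. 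Equivalently, your own exponent scheme at level $-s$ would require the bound $\||\nabla|^{s}u\|_{L^{r}}\lesssim\|u\|_{L^{\tilde r}}$ with $r<\tilde r$, which fails (positive-order derivatives do not improve integrability). So the $TT^*$ composition cannot produce the untruncated estimate, and the Christ--Kiselev step then has nothing to truncate. This is exactly why the pairs in $\mathcal{A}_{-s}$ are called non-admissible and why \cite{Foschi2005} is cited: the retarded estimate with an $\mathcal{A}_s$ pair on the left and the dual of an $\mathcal{A}_{-s}$ pair on the right is an inhomogeneous Strichartz estimate for non-admissible exponents, proved by a dyadic decomposition of the kernel in $|t-t'|$ combined with bilinear interpolation (Foschi, Vilela), not by composing two homogeneous estimates. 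Replacing your second step by an appeal to those results, after checking that the exponents satisfy the acceptability and joint scaling conditions there, repairs the argument.
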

% Strichartz estimates are one of the essential ingredients in the well-posedness theory and scattering for \eqref{eqinls}. Another key ingredient will be the following set of estimates on the nonlinearity. Recall that $s_c=\frac{1}{2}-\frac{2-b}{\alpha}$ is the critical Sobolev index.

Next, we present the following nonlinear estimates. A similar argument can be found in \cite{CFGM2022, Dinh2021-3}.

\begin{lemma}\label{lem nlest}
 Assume that $0<b<1$ and $4-2b<\alpha<\infty$. Let $u,v\in H^1(\R)$ be odd functions. Then there exists a sufficiently small $\theta\in(0,\alpha)$ such that
\begin{align}\label{est:line:1}
     \||x|^{-b}|u|^\alpha v\|_{S'(I,\dot{H}^{-s_c})}&\lesssim\|u\|^\theta_{L^\infty_tH^1_x}\|u\|^{\alpha-\theta}_{S(I,\dot{H}^{s_c})}\|v\|_{S(I,\dot{H}^{s_c})},\\\label{est:line:2}
           \||x|^{-b}|u|^\alpha v\|_{S'(I,L^2)}&\lesssim\|u\|^\theta_{L^\infty_tH^1_x}\|u\|^{\alpha-\theta}_{S(I,\dot{H}^{s_c})}\|v\|_{S(I,L^2)},\\\label{est:line:3}
           \|\nabla(|x|^{-b}|u|^\alpha u)\|_{S'(I,L^2)}&\lesssim\|u\|^\theta_{L^\infty_tH^1_x}\|u\|^{\alpha-\theta}_{S(I,\dot{H}^{s_c})}\|\nabla u\|_{S(I,L^2)},
\end{align}
where $I\subset \R$ and $s_c$ is defined as in  \eqref{def:scaling}.
\end{lemma}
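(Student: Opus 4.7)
The plan is to split the singular weight into near- and far-field pieces, $|x|^{-b}=|x|^{-b}\chi_{\{|x|\le 1\}}+|x|^{-b}\chi_{\{|x|>1\}}$. On $\{|x|>1\}$ the weight is bounded, so $|x|^{-b}|u|^\alpha v$ behaves like a pure power nonlinearity and is handled by the standard Strichartz/H\"older routine: pick a $\dot{H}^{-s_c}$-admissible pair $(q',r')$ on the left and $\dot{H}^{s_c}$-admissible pairs on the right, distribute the $\alpha$ factors of $u$ and one factor of $v$ across these norms, and absorb $\theta$ extra copies of $u$ through the one-dimensional Sobolev embedding $H^1(\R)\hookrightarrow L^\infty(\R)$ to produce the $\|u\|_{L^\infty_tH^1_x}^\theta$ factor that appears on the right-hand side.

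The near-origin piece is where the oddness enters. By the one-dimensional Hardy inequality for odd functions, the map $u\mapsto|x|^{-s}u$ is bounded into $L^p$ on a strictly larger range of $(s,p)$ than for general $u\in H^1(\R)$; in particular one may afford to place $|x|^{-b}$, and even $|x|^{-b-1}$, against a suitable power of $u$. After applying H\"older in $x$ on $\{|x|\le 1\}$ with exponents compatible with the chosen admissible pairs, one concentrates the singular weight onto a single factor of $u$, schematically of the form $|x|^{-b}|u|^\theta$ with $\theta>0$ arbitrarily small; odd Hardy together with Sobolev embedding dominates this by $\|u\|_{H^1}^\theta$. The remaining $\alpha-\theta$ copies of $u$ and the factor $v$ sit in $\dot{H}^{s_c}$-Strichartz norms, and a final H\"older in time yields the products asserted in \eqref{est:line:1} and \eqref{est:line:2}.

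Estimate \eqref{est:line:3} is the most delicate because $\nabla(|x|^{-b}|u|^\alpha u)$ produces two terms by the product rule. The tame one, $|x|^{-b}|u|^\alpha\nabla u$, is handled exactly as in \eqref{est:line:2} with $v$ replaced by $\nabla u$. The other, $|x|^{-b-1}|u|^\alpha u$, carries an exponent $b+1\in(1,2)$ that lies beyond the reach of the usual one-dimensional Hardy inequality. The key trick is to rewrite $|x|^{-b-1}|u|^\alpha u=|x|^{-b}|u|^\alpha\,(u/x)$ and to invoke the odd Hardy bound $\|u/x\|_{L^2_x}\lesssim\|\nabla u\|_{L^2_x}$, which holds pointwise in $t$ and upgrades to the corresponding Strichartz norm. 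With this identification $u/x$ plays the role of $\nabla u$, and the estimate collapses onto the case just treated.

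The main obstacle will be the H\"older bookkeeping on the near-origin region: the spatial exponents must simultaneously be compatible with $\dot{H}^{\pm s_c}$-admissibility, with the integrability range dictated by odd Hardy, and with the freedom to choose $\theta\in(0,\alpha)$ as small as we wish. That such a choice exists throughout the entire range $0<b<1$ and $4-2b<\alpha<\infty$ is precisely the improvement purchased by odd symmetry; without it, the $|x|^{-b-1}$ singularity appearing in \eqref{est:line:3} would not even be locally integrable against typical $H^1$ functions, so this step is the decisive one in the argument.
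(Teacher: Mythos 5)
Your proposal follows essentially the same route as the paper: split into $\{|x|\le 1\}$ and $\{|x|>1\}$, run H\"older with $\dot H^{\pm s_c}$- and $L^2$-admissible pairs, absorb $\theta$ copies of $u$ via Sobolev embedding to produce the $\|u\|_{L^\infty_t H^1_x}^\theta$ factor, and—decisively—rewrite $|x|^{-b-1}|u|^{\alpha+1}=|x|^{-b}|u|^\alpha\,|u/x|$ and apply the odd Hardy inequality $\|u/x\|_{L^{r}_x}\lesssim\|\nabla u\|_{L^{r}_x}$ so that $u/x$ plays the role of $\nabla u$ in \eqref{est:line:3}. One correction, though: your second paragraph misplaces where oddness is used. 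For \eqref{est:line:1} and \eqref{est:line:2} the near-origin piece needs no symmetry at all; the paper simply puts $|x|^{-b}$ into $L^\gamma(B(0,1))$ with $b\gamma<1$, which is pure local integrability. Your alternative of ``concentrating the weight onto $|x|^{-b}|u|^\theta$ and dominating it by odd Hardy'' would not work as literally stated: for small $\theta$ this is $(|x|^{-b/\theta}|u|)^\theta$, and $b/\theta\gg 1$ lies far outside the range of any Hardy inequality (which tops out at the weight $|x|^{-1}$). This does not damage your argument, since the step is unnecessary—oddness is genuinely needed only for the $|x|^{-b-1}$ term in \eqref{est:line:3}, exactly as your final paragraph says.
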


To establish local well-posedness, we also provide another version of the nonlinear estimates that depend on time, allowing for $0<\alpha\leq \infty$.

\begin{lemma}\label{lem Tnlest}
    Assume that $0<b<1$ and $0<\alpha<\infty$. Let $u,v\in H^1(\R)$ be odd functions. Then there exists  $\theta\in(\frac{1}{2},\frac{3}{4})$ such that
    \begin{align}\label{eq tl2}
    \||x|^{-b}|u|^\alpha v\|_{S'(I,L^2)}&\lesssim T^\theta\|u\|^\alpha_{L^\infty_t(I)H^1_x}\|v\|_{S(I,L^2)},\\
    \label{eq th2}
           \|\nabla(|x|^{-b}|u|^\alpha u)\|_{S'(I,L^2)}&\lesssim T^\theta\|u\|^\alpha_{L^\infty_t(I)H^1_x}\|\nabla u\|_{S(I,L^2)},
\end{align}
for $I=[0,T]$.
\end{lemma}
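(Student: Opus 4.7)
The plan is to prove both estimates via a dual-Strichartz argument with a single, well-chosen $L^2$-admissible pair, extracting the $T^\theta$ factor from H\"older in time and taming the weight $|x|^{-b}$ via the oddness of $u$. Since the target range is $\theta\in(1/2,3/4)$, I would fix an $L^2$-admissible pair $(q,r)$ with $\frac{2}{q}+\frac{1}{r}=\frac{1}{2}$ and $q\in(4,8)$, and use the same pair on both the LHS (dual-Strichartz) and RHS (Strichartz) of the estimate. Once a pointwise-in-time spatial bound is in place, the inclusion $L^{q}_t([0,T])\hookrightarrow L^{q'}_t([0,T])$ gives $\|f\|_{L^{q'}_t([0,T])}\lesssim T^{\,1-2/q}\|f\|_{L^{q}_t([0,T])}$, producing the exponent $\theta=1-2/q$, which lies strictly in $(1/2,3/4)$.

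For \eqref{eq tl2}, the core of the argument is the pointwise-in-time bound
$$
\bigl\||x|^{-b}|u|^{\alpha} v\bigr\|_{L^{r'}_x}\lesssim \|u\|_{H^1_x}^{\alpha}\,\|v\|_{L^{r}_x}.
$$
I would split the spatial domain into $\{|x|>1\}$ and $\{|x|\leq 1\}$. On $\{|x|>1\}$ the weight is bounded by $1$, and H\"older in $x$ together with the $1$D Sobolev embedding $H^1\hookrightarrow L^\infty$ closes the estimate. On $\{|x|\leq 1\}$ the oddness of $u$ is essential: since $u(0)=0$, the fundamental theorem of calculus and Cauchy--Schwarz give $|u(x)|\leq|x|^{1/2}\|u\|_{H^1}$, hence $|x|^{-b}|u|^{\alpha}\leq|x|^{\alpha/2-b}\|u\|^{\alpha}_{H^1}$. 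When $\alpha\geq 2b$ this $x$-power is bounded on $\{|x|\leq 1\}$; when $\alpha<2b$ one applies an additional H\"older in $x$, placing $|x|^{\alpha/2-b}\in L^{\rho}(\{|x|\leq 1\})$ for some $\rho<\tfrac{2}{2b-\alpha}$, which is feasible thanks to $b<1$ by choosing $r$ sufficiently large (equivalently $q$ close to $4$) within our admissible window. Alternatively one can write $|x|^{-b}|u|^\alpha=|x|^{1-b}(u/x)|u|^{\alpha-1}$ and invoke the $1$D odd Hardy inequality (Lemma~\ref{Lem Hardy}) to control $u/x$ in $L^p$ by $\|\nabla u\|_{L^p}$.

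For the gradient estimate \eqref{eq th2}, the product rule yields the schematic bound
$$
\bigl|\nabla\bigl(|x|^{-b}|u|^{\alpha}u\bigr)\bigr|\lesssim |x|^{-b-1}|u|^{\alpha+1}+|x|^{-b}|u|^{\alpha}|\nabla u|.
$$
The second term is of exactly the form treated in \eqref{eq tl2} with $v=\nabla u$. For the first, genuinely more singular term, I would factor
$$
|x|^{-b-1}|u|^{\alpha+1}=|x|^{-b}|u|^{\alpha}\cdot\frac{u}{x},
$$
and use the odd Hardy inequality to control $u/x$ in $L^2_x$ by $\|\nabla u\|_{L^2_x}$, which reduces the analysis to the previous $L^2$-type bound with $v=u/x$. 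Assembling these pieces pointwise in $t$ and then applying H\"older in time produces the advertised factor $T^\theta$.

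The main obstacle is the interplay between $b$ and $\alpha$: when $\alpha$ is small and $b$ close to $1$, the pointwise decay $|u(x)|\lesssim|x|^{1/2}$ is only just enough to make $|x|^{-b}|u|^{\alpha}$ locally integrable, and the weight $|x|^{-b-1}$ appearing in the gradient term is non-integrable near $0$ without the oddness hypothesis. The argument must therefore be calibrated so that the admissible pair $(q,r)$ can be chosen with $r$ sufficiently large to absorb the singular factor via H\"older in $x$, while keeping $\theta=1-2/q$ strictly inside $(1/2,3/4)$.
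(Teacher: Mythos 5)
Your handling of the interior region and of the gradient term via the odd Hardy inequality is sound and mirrors the paper, but there is a genuine gap in the exterior region $\{|x|>1\}$, and it traces back to your decision to use the \emph{same} $L^2$-admissible pair $(q,r)$ on both sides with $q\in(4,8)$. Since $\frac{2}{q}=\frac12-\frac1r$ in one dimension, requiring $\theta=1-\frac2q<\frac34$ forces $r>4$, so your pointwise-in-time bound $\||x|^{-b}|u|^\alpha v\|_{L^{r'}_x}\lesssim\|u\|_{H^1}^\alpha\|v\|_{L^r_x}$ must absorb a H\"older deficit of $\frac{1}{r'}-\frac{1}{r}=1-\frac{2}{r}>\frac12$ into the factor $|x|^{-b}|u|^\alpha$ on $\{|x|>1\}$. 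Bounding $|x|^{-b}\le 1$ and $|u|^\alpha\le\|u\|^\alpha_{L^\infty}$, as you propose, absorbs none of it and leaves $\|v\|_{L^{r'}(\{|x|>1\})}$, which is \emph{not} controlled by $\|v\|_{L^r}$ on an unbounded set. The most the exterior factors can contribute is $\frac1\gamma<b$ from $|x|^{-b}\in L^\gamma(\{|x|>1\})$ plus $\frac{\alpha}{\rho}\le\frac{\alpha}{2}$ from $u\in H^1\subset L^\rho$, $\rho\ge 2$ (Sobolev gives nothing below $L^2$ on an unbounded set). Hence your scheme closes only when $b+\frac{\alpha}{2}>\frac12$, and the lemma must cover all $0<\alpha<\infty$, $0<b<1$; for, say, $\alpha=b=\frac1{10}$ the estimate you assert is simply false.

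The paper avoids this by decoupling the two sides: the left-hand side is measured in the dual of the $(\alpha,b)$-dependent pair $(q_0,r_0)=\bigl(\frac{4(\alpha+2)}{\alpha+2b},\frac{\alpha+2}{1-b}\bigr)$, while $u$ and $v$ are placed in $L^r_x$ for a \emph{different} admissible pair (with $r=\alpha+2$ on the exterior), so the spatial deficit left for the weight is only $\frac{1}{r_0'}-\frac{\alpha+1}{r}=\frac{b}{\alpha+2}\in(0,b)$, which is always attainable; the factor $T^\theta$ then comes from H\"older in time via $\frac{1}{q_0'}=\theta+\frac{1}{q}$ rather than from $L^q_t\hookrightarrow L^{q'}_t$ for a single $q$. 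Alternatively, a single-pair scheme can be salvaged, but only by choosing $r$ in an $(\alpha,b)$-dependent window (one needs $(b-\frac{\alpha}{2})_+<1-\frac{2}{r}<b+\frac{\alpha}{2}$, which pushes $r$ close to $2$ when $\alpha$ and $b$ are small) and accepting $\theta=\frac12+\frac1r\in(\frac12,1)$ rather than $(\frac12,\frac34)$ — harmless for the application, but it means abandoning exactly the constraint $q<8$ you imposed. Your pointwise use of oddness, $|u(x)|\le|x|^{1/2}\|\nabla u\|_{L^2}$, on the interior is a legitimate variant of the paper's argument (which uses oddness only through Hardy for the $|x|^{-b-1}$ term), but it does not touch the exterior problem above.
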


To prove Lemma \ref{lem nlest} and Lemma \ref{lem Tnlest}, we need the following one-dimensional Hardy inequality.

\begin{lemma}\label{Lem Hardy}
    If $1<p<\infty$ and $u\in H^1(\R)$ is an odd function, then \begin{equation}\label{Hardy}
        \int^\infty_0\frac{|u(r)|^p}{r^p}dr\leq\left(\frac{p}{p-1}\right)^p\int^\infty_0|\nabla u(r)|^pdr.
    \end{equation}
\end{lemma}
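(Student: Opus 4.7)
The plan is to prove the claim by the classical Hardy-type argument on the half-line, using the crucial feature that for an odd function $u \in H^1(\R)$ the one-dimensional Sobolev embedding $H^1(\R) \hookrightarrow C^0(\R)$ forces $u(0)=0$. First I would reduce the problem by density: approximate $u$ by a sequence of odd functions $u_n \in C_c^\infty(\R)$ with $u_n \to u$ in $H^1$; since oddness is preserved by convolution with even mollifiers and cutoff by even cutoffs, such a sequence exists. Since $u_n(0)=0$ and $u_n$ is smooth with compact support, the integrand $|u_n(r)|^p / r^p$ is locally integrable near the origin and has compact support, so all integrals below are classically finite.

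The core computation is an integration by parts on $(0,\infty)$ against the primitive $-\frac{1}{(p-1)r^{p-1}}$ of $r^{-p}$. Writing $v=|u_n|$, one has $v \in W^{1,1}_{\mathrm{loc}}$ with $|v'|\le |u_n'|$ a.e., and
\begin{equation*}
\int_0^\infty \frac{v(r)^p}{r^p}\,dr
= \left[ -\frac{v(r)^p}{(p-1)r^{p-1}} \right]_0^\infty
+ \frac{p}{p-1}\int_0^\infty \frac{v(r)^{p-1}\,v'(r)}{r^{p-1}}\,dr.
\end{equation*}
The boundary term at $\infty$ vanishes since $u_n$ has compact support, and at $0$ it vanishes because $v(0)=0$ combined with smoothness gives $v(r)=O(r)$, hence $v(r)^p/r^{p-1} = O(r)$.

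Next I would apply Hölder's inequality with conjugate exponents $p/(p-1)$ and $p$ to the right-hand side:
\begin{equation*}
\int_0^\infty \frac{v(r)^{p-1}\,v'(r)}{r^{p-1}}\,dr
\le \left(\int_0^\infty \frac{v(r)^p}{r^p}\,dr\right)^{\!(p-1)/p}
\left(\int_0^\infty |v'(r)|^p\,dr\right)^{\!1/p}.
\end{equation*}
Calling $I_n := \int_0^\infty v^p/r^p\,dr$, this yields $I_n \le \tfrac{p}{p-1}\,I_n^{(p-1)/p}\,\|u_n'\|_{L^p(\R_+)}$, and dividing (which is legal since $I_n<\infty$) gives $I_n^{1/p}\le \tfrac{p}{p-1}\|u_n'\|_{L^p(\R_+)}$, i.e.\ the desired inequality for $u_n$.

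Finally I would pass to the limit $n\to\infty$. The right-hand side converges to $\left(\tfrac{p}{p-1}\right)^p\int_0^\infty |\nabla u(r)|^p\,dr$ along a subsequence (possibly after passing to an a.e.-convergent subsequence of $u_n'$), and the left-hand side is controlled by Fatou's lemma applied to $|u_n|^p/r^p$, using that $u_n \to u$ uniformly on compacts. The only subtle point, which I expect to be the main technical obstacle, is verifying that an approximating sequence of odd $C_c^\infty$ functions converging to $u$ in $H^1(\R)$ really exists and that the inequality is non-vacuous when the right-hand side is finite; both are handled in the standard way, via even-mollification plus even truncation, which preserves oddness and converges in $H^1$, so no further difficulty appears.
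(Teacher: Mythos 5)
Your argument is correct in substance but takes a genuinely different, self-contained route from the paper. The paper disposes of the lemma in three lines: it observes that an odd $u\in H^1(\R)$ has a continuous representative $v$ that is locally absolutely continuous on $(0,\infty)$ with $\liminf_{r\to 0}|v(r)|=0$, and then simply cites the known one-dimensional Hardy inequality for such functions (Frank--Laptev--Weidl, or Brezis, Exercise~8.8). You instead reprove that classical inequality from scratch: density of odd $C_c^\infty$ functions, integration by parts against the primitive $-\tfrac{1}{(p-1)r^{p-1}}$, H\"older with exponents $p$ and $p/(p-1)$, and a limiting argument. Your computation is the standard proof of Hardy's inequality and each step checks out (the boundary term at $0$ dies because $u_n(0)=0$ forces $|u_n(r)|^p/r^{p-1}=O(r)$, and dividing by $I_n^{(p-1)/p}$ is legitimate since $I_n<\infty$ for smooth compactly supported data vanishing at the origin). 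The one soft spot is the final passage to the limit on the right-hand side: convergence $u_n\to u$ in $H^1$ only gives $u_n'\to u'$ in $L^2$, and ``passing to an a.e.-convergent subsequence of $u_n'$'' combined with Fatou yields an inequality in the wrong direction for the right-hand side. The fix is easy but should be stated: the inequality is vacuous unless $\int_0^\infty|\nabla u|^p\,dr<\infty$, and in that case oddness makes $u'$ even, hence $u'\in L^p(\R)\cap L^2(\R)$, so even mollification (a contraction on every $L^p$) together with even truncation produces odd approximants with $u_n'\to u'$ in $L^p(0,\infty)$ as well, after which the limit goes through with Fatou on the left. With that sentence added your proof is complete; the trade-off is that your version is longer but self-contained, whereas the paper's is shorter but leans on an external reference.
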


\begin{proof}
Since $u\in H^1(\R)$ is an odd function, there is a function $v\in C(\R)$ such that $v=u$ holds almost everywhere, and $v|_{(0,\infty)}$ is locally absolutely continuous on $(0,\infty)$ with $\liminf_{r\rightarrow0}|v(r)|=0$. Therefore, from the Hardy inequality for absolutely continuous functions (see the inequality (1) or 
Theorem 1 of \cite{FLW2022}), inequality \eqref{Hardy} holds. A similar result can be founded in Exercise 8.8 in \cite{Brezis}.
\end{proof}

\begin{proof}[Proof of Lemma \ref{lem nlest}]
 Following a similar argument as in \cite[Lemma 2.1]{CFGM2022}, we define $B=B(0,1)=\{x\in\R:|x|\leq 1\}$ and let $B^C$ denote its complement. We introduce the following parameters
\begin{equation*}\begin{split}
        \hat{q}=\frac{4\alpha(\alpha+2-\theta)}{\alpha(\alpha+2b)-\theta(\alpha-4+2b)},~~~\hat{r}=\frac{\alpha(\alpha+2-\theta)}{\alpha(1-b)-\theta(2-b)},\\
        \tilde{a}=\frac{2\alpha(\alpha+2-\theta)}{\alpha(\alpha-\theta-1+2b)-(4-2b)(1-\theta)},~~~\hat{a}=\frac{2\alpha(\alpha+2-\theta)}{\alpha+4-2b}.
        \end{split}
\end{equation*}
For a sufficiently small $\theta>0$, it is straightforward to verify that $(\hat{q},\hat{r})\in \mathcal{A}_0$, $(\hat{a},\hat{r})\in\mathcal{A}_{s_c}$ and $(\tilde{a},\hat{r})\in\mathcal{A}_{-s_c}$. These exponents satisfy the scaling relations:
\begin{equation*}
    \frac{1}{\tilde{a}'}=\frac{\alpha-\theta}{\hat{a}}+\frac{1}{\hat{a}}~~~~\text{and}~~~~ \frac{1}{\hat{p}'}=\frac{\alpha-\theta}{\hat{a}}+\frac{1}{\hat{p}}.
\end{equation*}
To prove \eqref{est:line:1}, taking $r_1\in(\frac{1}{\theta},\infty)$ and $\gamma$ such that
\begin{equation*}
    \frac{1}{\hat{r}'}=\frac{1}{\gamma}+\frac{1}{r_1}+\frac{\alpha-\theta}{\hat{r}}+\frac{1}{\hat{r}},
\end{equation*}
by applying H\"{o}lder's inequality, we have
\begin{equation*}%\label{est:1}
        \||x|^{-b}|u|^\alpha v\|_{L^{\hat{r}'}_x(A)}\lesssim\||x|^{-b}\|_{L^\gamma(A)}\|u\|^\theta_{L^{\theta r_1}_x}\|u\|^{\alpha-\theta}_{L^{\hat{r}}_x}\|v\|_{L^{\hat{r}}_x},
\end{equation*}
where $A$  represents either $B$ or $B^C$.

Indeed, observe that
\begin{equation*}
        \frac{1}{\gamma}-b=\frac{\theta(2-b)}{\alpha}-\frac{1}{r_1}.
    \end{equation*}
If $A=B$, we choose $r_1$ such that $\theta r_1>\frac{\alpha}{2-b}$. This ensures $\frac{1}{\gamma}-b>0$ and $|x|^{-b}\in L^\gamma(B)$. If $A=B^C$, we choose $r_1$ so that $\theta r_1=2$. In this case, $\frac{1}{\gamma}-b<0$, which implies $|x|^{-b}\in L^\gamma(B^C)$. In both cases, $\||x|^{-b}\|_{L^\gamma(A)}<\infty$.

Using the Sobolev embedding $H^1\subset L^{\theta r_1}$ and H\"{o}lder's inequality in time, we derive
\begin{equation*}
        \||x|^{-b}|u|^\alpha v\|_{L^{\tilde{a}'}_tL^{\hat{r}'}_x(A)}\lesssim\|u\|^\theta_{L^\infty_tH^1_x}\|u\|^{\alpha-\theta}_{L^{\hat{a}}_tL^{\hat{r}}_x}\|v\|_{L^{\hat{a}}_tL^{\hat{r}}_x}.
    \end{equation*}
This implies that \eqref{est:line:1} holds.

Similarly, \eqref{est:line:2} can be established by 
\begin{equation*}
    \||x|^{-b}|u|^\alpha v\|_{L^{{\hat{q}}'}_tL^{\hat{r}'}_x(A)}\lesssim\|u\|^\theta_{L^\infty_tH^1_x}\|u\|^{\alpha-\theta}_{L^{\hat{a}}_tL^{\hat{r}}_x}\|v\|_{L^{\hat{q}}_tL^{\hat{r}}_x}.
\end{equation*}

To prove \eqref{est:line:3}, we use the chain rule to deduce
\begin{equation*}
    \left\|\nabla\left(|x|^{-b}|u|^\alpha u\right)\right\|_{L^{\hat{q}'}_tL^{\hat{r}'}_x(A)}\lesssim \||x|^{-b}|u|^\alpha |\nabla u|\|_{L^{\hat{q}'}_tL^{\hat{r}'}_x(A)}+ \||x|^{-b-1}|u|^{\alpha+1}\|_{L^{\hat{q}'}_tL^{\hat{r}'}_x(A)}=N_1+N_2.
\end{equation*}
By the similar argument as used for \eqref{est:line:1} and \eqref{est:line:2}, we have
 \begin{equation*}       N_1\lesssim\|u\|^\theta_{L^\infty_tH^1_x}\|u\|^{\alpha-\theta}_{L^{\hat{a}}_tL^{\hat{r}}_x}\|\nabla u\|_{L^{\hat{q}}_tL^{\hat{r}}_x},~~~~\text{and}~~~~
N_2\lesssim\|u\|^\theta_{L^\infty_tH^1_x}\|u\|^{\alpha-\theta}_{L^{\hat{a}}_tL^{\hat{r}}_x}\left\|\frac{u}{x}\right\|_{L^{\hat{q}}_tL^{\hat{r}}_x}.
\end{equation*}
From Lemma \ref{Lem Hardy}, we obtain
\begin{equation*}
 \left\|\frac{u}{x}\right\|_{L^{\hat{q}}_tL^{\hat{r}}_x}\lesssim\|\nabla u\|_{L^{\hat{q}}_tL^{\hat{r}}_x},
\end{equation*}
and
\begin{equation*}
   \left\|\nabla\left(|x|^{-b}|u|^\alpha u\right)\right\|_{L^{\hat{q}'}_tL^{\hat{r}'}_x(A)}\lesssim\|u\|^\theta_{L^\infty_tH^1_x}\|u\|^{\alpha-\theta}_{L^{\hat{a}}_tL^{\hat{r}}_x}\|\nabla u\|_{L^{\hat{q}}_tL^{\hat{r}}_x}.
\end{equation*}
By combining these estimates, we prove \eqref{est:line:3}. Thus, we complete the proof of Lemma \ref{lem nlest}.
\end{proof}

% {\color{blue}
% Furthermore, to prove the local well-posedness we also give an other nonlinear estimate depending on time, which allows $0<\alpha\leq 4-2b$.

% \begin{lemma}\label{lem Tnlest}
%     Assume that $0<b<1$ and $0<\alpha<\infty$. Let $u,v\in H^1(\R)$ be odd functions. Then there exists $\theta\in(\frac{1}{2},1)$ such that
%     \begin{align}\label{eq tl2}
%     \||x|^{-b}|u|^\alpha v\|_{S'(I,L^2)}&\lesssim T^\theta\|u\|^\alpha_{L^\infty_tH^1_x}\|v\|_{S(L^2)},\\
%     \label{eq th2}
%            \|\nabla(|x|^{-b}|u|^\alpha u)\|_{S'(I,L^2)}&\lesssim T^\theta\|u\|^\alpha_{L^\infty_tH^1_x}\|\nabla u\|_{S(L^2)},
% \end{align}
% where $I=[0,T]$.
% \end{lemma}
% }

\begin{remark}%\label{rem nlest}
In this proof, the oddness is specifically used to address the singularity term $|x|^{-(b+1)}$ of $N_2$. Obviously, for other arbitrary functions in $H^1(\R)$ without oddness, one even can not expect that the part containing $|x|^{-(b+1)}$ is boundedness. On the other hand, we demonstrate a variant of \eqref{est:line:2} as following
     \begin{equation}\label{rem nlest}
         \||x|^{-b}|u|^{\alpha-1}vw\|_{S'(I,L^2)}\lesssim\|u\|^\theta_{L^\infty_tH^1_x}\|u\|^{\alpha-1-\theta}_{S(\dot{H}^{s_c})}\|v\|_{S(\dot{H}^{s_c})}\|w\|_{S(L^2)},
     \end{equation}
which is derived by using the following relations
\begin{equation*}
         \frac{1}{\hat{r}'}=\frac{1}{\gamma}+\frac{1}{r_1}+\frac{\alpha-1-\theta}{\hat{r}}+\frac{1}{\hat{r}}+\frac{1}{\hat{r}},
         ~~\text{and}~~\frac{1}{\hat{q}'}=\frac{1}{\infty}+\frac{1}{\infty}+\frac{\alpha-1-\theta}{\hat{a}}+\frac{1}{\hat{a}}+\frac{1}{\hat{q}}.
     \end{equation*}
     \end{remark}

\begin{proof}[Proof of Lemma \ref{lem Tnlest}]
By the similar approach as the proof of \eqref{est:line:3}, \eqref{eq th2} can be derived by replacing $v$ with $\nabla u$ in \eqref{eq tl2}. Therefore, we focus on proving \eqref{eq tl2}.
As before, we divide the estimation into regions  $B$ and $B^C$. We define an $L^2$-admissible pair
$$r_0=\frac{\alpha+2}{1-b},\qquad q_0=\frac{4\alpha+8}{\alpha+2b}.$$
Selecting $r$, $\gamma$, $q$ and $\theta$ such that $(q,r)\in \mathcal{A}_0$ and
$$\frac{1}{{r_0}'}=\frac{1}{\gamma}+\frac{1}{\infty}+\frac{\alpha+1}{r},~~~~\text{and}~~~~\frac{1}{{q_0}'}=\theta+\frac{1}{\infty}+\frac{1}{q}, $$
by H\"{o}lder's inequality, we have
$$  \||x|^{-b}|u|^\alpha v\|_{L^{{q_0}'}_t(I)L^{{r_0}'}_x(A)}\lesssim\||x|^{-b}\|_{L^\gamma(A)}\|1\|_{L^\frac{1}{\theta}_t(I)L^\infty_x}\|u\|^{\alpha}_{L^\infty_t(I) L^r_x}\|v\|_{L^q_t(I)L^r_x},$$
where $A$ can be either $B$ or $B^C$.

If $A=B$, we choose $r=\frac{\alpha+2}{1-b}+\epsilon$ with a sufficiently small $\epsilon>0$ and the corresponding $q$. Notice that $r>\frac{\alpha+2}{1-b}>2$ and
$$\frac{1}{\gamma}=\frac{1}{{r_0}'}-\frac{\alpha+1}{r}>\frac{\alpha+1+b}{\alpha+2}-\frac{(1-b)(\alpha+1)}{\alpha+2}=b,$$
thus $|x|^{-b}\in L^{\gamma}(B)$.
If $A=B^C$, we choose $r=\alpha+2$ and the corresponding $q$, so that $\frac{1}{\gamma}=\frac{b}{\alpha+2}<b$. Thus $|x|^{-b}\in L^{\gamma}(B^C)$.

In both cases, since $H^1\subset L^r$, we have
$$\||x|^{-b}|u|^\alpha v\|_{L^{{q_0}'}_t(I)L^{{r_0}'}_x(A)}\lesssim T^\theta \|u\|^{\alpha}_{L^\infty_t(I) H^1_x}\|v\|_{L^q_t(I)L^r_x}.$$
On the other hand, for any $(q,r)\in \mathcal{A}_0$, we have $q>4$. This implies $\frac{1}{2}<\theta<\frac{3}{4}$ in any case. Hence there exists $\theta\in(\frac{1}{2},\frac{3}{4})$ such that \eqref{eq tl2} holds.
\end{proof}

In order to obtain the scattering result, we also require the following linear profile decomposition for sequences that are bounded in $H^1$.

% \begin{lemma}[Linear profile decomposition \cite{CFGM2022,FXC2011}]\label{lem lpd}
%     Let $\{\phi_n\}$ be a bounded sequence in $H^1(\R)$. There exists $M^*\in\N\bigcup\{\infty\}$ so that for each $1\leq j\leq M\leq M^*$ there exists a profile $\psi^j$ in $H^1(\R)$, a sequence $t^j_n$ of time shifts, a sequence $x^j_n$ of space shifts, and a sequence $W^M_n(x)$ of remainders in $H^1(\R)$, such that\begin{equation*}
%         \phi_n(x)=\sum^M_{j=1} e^{-it^j_n\Delta}\psi^j(x-x^j_n)+W^M_n(x)
%     \end{equation*}
%     along a subsequence, with the following properties:\begin{itemize}
%         \item[(1)] Asymptotic orthogonality of the parameters: for $1\leq k\neq j\leq M$,\begin{equation*}
%             \lim_{n\rightarrow+\infty}|t^j_n-t^k_n|+|x^j_n-x^k_n|=\infty.
%         \end{equation*}
%         \item[(2)] Asymptotic vanishing for the remainders:\begin{equation*} \lim_{M\rightarrow+\infty}\left(\lim_{n\rightarrow+\infty}\|e^{it\Delta}W^M_n\|_{S(\dot{H}^{s_c})}\right)=0.
%         \end{equation*}
%         \item[(3)] Asymptotic decoupling: for $M\in\N$ and any $s\in[0,1]$, we have \begin{equation}\label{m-e d}
%             \|\phi_n\|^2_{H^s}=\sum^M_{j=1}\|\psi^j\|^2_{H^s}+\|W^M_n\|^2_{H^s}+o_n(1)\quad as\quad n\rightarrow\infty.
%         \end{equation}
%     \end{itemize}
%     Finally, we may assume either $t^j_n\equiv0$ or $t^j_n\rightarrow\pm\infty$, and either $x^j_n\equiv0$ or $|x^j_n|\rightarrow\infty$.
% \end{lemma}

\begin{lemma}\label{lem lpdo}
Let $\{\phi_n\}$ be a sequence of bounded odd functions in $H^1(\R)$. Then there exists $M^*\in\N\bigcup\{\infty\}$ so that for each $1\leq j\leq M\leq M^*$, there exists an odd profiles sequence $\Psi^j_n$ in $H^1(\R)$, with parameters $t^j_n\in\mathbb{R}$ and $x^j_n\geq0$, as well as an odd sequence $W^M_n(x)$ of remainders in $H^1(\R)$ such that (up to a subsequence)
\begin{equation*}
        \phi_n(x)=\sum^M_{j=1} e^{-it^j_n\Delta}\Psi^j_n(x)+W^M_n(x).
\end{equation*}
The following properties hold for this decomposition:

(1) The profiles sequence is given by:
        \begin{equation*}
            \Psi^j_n(x)=\psi^j(x-x^j_n)-\psi^j(-x-x^j_n)
        \end{equation*}
with the support of $\psi^j$ contained in $[0,+\infty)$.

(2) Asymptotic orthogonality of the parameters: for $1\leq k\neq j\leq M$,
\begin{equation}\label{eq orth}
            \lim_{n\rightarrow+\infty}|t^j_n-t^k_n|+|x^j_n-x^k_n|=\infty.
\end{equation}

(3) Asymptotic vanishing for the remainders:
\begin{equation}\label{eq vanish rem}\lim_{M\rightarrow+\infty}\left(\lim_{n\rightarrow+\infty}\|e^{it\Delta}W^M_n\|_{S(\dot{H}^{s_c})}\right)=0.
\end{equation}

(4) Asymptotic mass/energy decoupling: for any $s\in[0,1]$, we have \begin{equation}\label{m-e do}
\|\phi_n\|^2_{\dot{H}^s}=\sum^M_{j=1}\|\Psi^j_n\|^2_{\dot{H}^s}+\|W^M_n\|^2_{\dot{H}^s}+o_n(1)\quad as\quad n\rightarrow\infty.
        \end{equation}

(5) Energy Pythagorean expansion:
\begin{equation*}
        E[\phi_n]=\sum^M_{j=1}E[e^{-it^j_n\Delta}\Psi^j_n]+E[W^M_n]+o_n(1).
\end{equation*}

Furthermore, we may assume that either $t^j_n\equiv0$ or $t^j_n\rightarrow\pm\infty$, and either $x^j_n\equiv0$ or $|x^j_n|\rightarrow\infty$.
\end{lemma}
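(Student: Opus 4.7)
The plan is to derive this odd-symmetric decomposition from the classical Keraani-type linear profile decomposition for $H^1$-bounded sequences, followed by an odd-symmetrization. First I would apply the classical decomposition (as used in \cite{Dinh2019}) to $\{\phi_n\}$, viewed simply as a bounded sequence in $H^1(\R)$, obtaining
\[
\phi_n(x) = \sum_{j=1}^M e^{-it^j_n\Delta}\tilde\psi^j(x - y^j_n) + \tilde W^M_n(x),
\]
together with the usual asymptotic orthogonality of the parameters $(t^j_n, y^j_n)$, the Strichartz vanishing of $e^{it\Delta}\tilde W^M_n$ in $S(\dot H^{s_c})$, and the $\dot H^s$ Pythagorean decoupling. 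Passing to a subsequence, I may assume $t^j_n \equiv 0$ or $|t^j_n|\to\infty$, and $y^j_n \equiv 0$ or $|y^j_n|\to\infty$.

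Next, since $\phi_n(x) = -\phi_n(-x)$ and the free propagator commutes with spatial reflection, averaging the classical decomposition with its reflection produces
\[
\phi_n(x) = \tfrac{1}{2}\sum_{j=1}^M e^{-it^j_n\Delta}\bigl[\tilde\psi^j(x - y^j_n) - \tilde\psi^j(-x - y^j_n)\bigr] + \tfrac{1}{2}\bigl(\tilde W^M_n(x) - \tilde W^M_n(-x)\bigr).
\]
Each term of the sum is already an odd dipole concentrated near $\pm|y^j_n|$. Setting $x^j_n := |y^j_n| \geq 0$, choosing a sign convention that places the primary bubble at $+x^j_n$, and truncating the resulting shape to its right half, I would rewrite each profile as $\psi^j(x - x^j_n) - \psi^j(-x - x^j_n)$ with $\operatorname{supp}(\psi^j) \subset [0, \infty)$. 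The truncation error is absorbed into $W^M_n := \tfrac{1}{2}(\tilde W^M_n(x) - \tilde W^M_n(-x))$ and vanishes in $H^1$, because for $x^j_n\to\infty$ the two mirror copies are already supported at mutual distance $\sim x^j_n$, while for $x^j_n \equiv 0$ the dipole is simply twice the odd part of $\tilde\psi^j$ with no cutoff needed.

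I would then verify properties (2)--(4) by transfer. Parameter orthogonality (2) follows from the orthogonality of $(t^j_n, y^j_n)$ after grouping mirror pairs. The Strichartz vanishing (3) is inherited because the odd projection $f\mapsto \tfrac{1}{2}(f(x)-f(-x))$ is bounded on every space appearing in $S(\dot H^{s_c})$. The $\dot H^s$ decoupling (4) uses the translation- and reflection-invariance of $\|\cdot\|_{\dot H^s}$: cross terms between $\psi^j(\cdot - x^j_n)$ and $\psi^j(-\cdot - x^j_n)$ vanish asymptotically when $x^j_n\to\infty$ by spatial separation, and cross terms across distinct profiles vanish by the original parameter orthogonality.

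The main obstacle is the energy Pythagorean expansion (5), which departs from the $\dot H^1$ decoupling because the potential term $\tfrac{1}{\alpha+2}\int|x|^{-b}|\cdot|^{\alpha+2}$ is not translation-invariant. The kinetic contribution decouples by (4) with $s=1$. For the potential part I would split according to $(t^j_n, x^j_n)$: (i) if $x^j_n \equiv 0$ and $t^j_n \equiv 0$, then $\int|x|^{-b}|\Psi^j|^{\alpha+2}$ is finite thanks to Lemma \ref{Lem Hardy} applied to the odd profile; (ii) if $x^j_n \to \infty$, then $\Psi^j_n$ is supported in $|x|\geq x^j_n$, giving $\int|x|^{-b}|\Psi^j_n|^{\alpha+2} \lesssim (x^j_n)^{-b}\|\psi^j\|_{L^{\alpha+2}}^{\alpha+2} \to 0$; (iii) if $x^j_n \equiv 0$ and $|t^j_n|\to\infty$, the dispersive decay of $e^{-it^j_n\Delta}\psi^j$ in $L^{\alpha+2}$ together with Hardy's inequality forces $\int|x|^{-b}|e^{-it^j_n\Delta}\psi^j|^{\alpha+2} \to 0$. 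Cross terms across distinct profiles vanish by the asymptotic orthogonality of $(t^j_n, x^j_n)$, after a standard approximation by compactly supported functions. Combining these with the analogous contribution of $W^M_n$ gives (5).
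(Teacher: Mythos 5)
Your overall strategy is sound and ends up in the same place as the paper's, but the constructions are organized differently, and the difference matters at exactly one point. The paper does not decompose $\phi_n$ itself: it writes $\phi_n(x)=g_n(x)-g_n(-x)$ with $g_n:=\phi_n\mathbf{1}_{[0,\infty)}$ (admissible in $H^1$ because odd $H^1$ functions vanish at the origin), applies the standard translation-only profile decomposition of \cite{FXC2011}, in the form of \cite[Proposition 2.5]{CFGM2022}, to the half-line supported sequence $g_n$, and then reflects; each profile of $g_n$ thus produces exactly one odd dipole $\psi^j(x-x^j_n)-\psi^j(-x-x^j_n)$ with $x^j_n\geq 0$, properties (1)--(4) are inherited directly, and (5) is obtained by the potential-energy decoupling of \cite[Proposition 5.3]{FG2020} --- essentially the same three-case analysis you carry out by hand at the end.

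Your route --- decompose all of $\phi_n$ and then odd-symmetrize --- must confront an issue that the paper's route sidesteps. Since $\phi_n$ is odd and $e^{it\Delta}$ commutes with reflection, the classical decomposition of $\phi_n$ produces its nontrivial profiles in mirror pairs: if $\phi_n(\cdot+y^j_n)$ concentrates a bubble $\tilde\psi^j$ at $y^j_n\to+\infty$, there is necessarily a distinct index $k$ with $y^k_n=-y^j_n+O(1)$, $t^k_n=t^j_n+O(1)$ and $\tilde\psi^k=-\tilde\psi^j(-\cdot)$ up to reparametrization. After your symmetrization these two indices yield the \emph{same} dipole with the \emph{same} parameters $(t^j_n,|y^j_n|)$, so the decomposition as you display it violates the asymptotic orthogonality (2), and each bubble carries amplitude $\tfrac12\tilde\psi^j$ rather than $\tilde\psi^j$, which would also break the Pythagorean identity (4) (a separated half-amplitude dipole has squared norm $\tfrac12\|\tilde\psi^j\|^2$, not $\|\tilde\psi^j\|^2$). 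You acknowledge this with the phrase ``after grouping mirror pairs,'' and merging each pair into a single full-amplitude dipole (treating the self-paired case $y^j_n\equiv 0$ separately, where the odd part needs no merging) does repair both (2) and (4); but this merging is the one genuinely new step of your approach and needs to be carried out, not merely invoked. The remaining ingredients --- boundedness of the odd projection on the Strichartz spaces for (3), the $o_n(1)$ truncation in $H^1$ to place $\operatorname{supp}\psi^j\subset[0,\infty)$, and your case analysis for the potential energy in (5) using $|x|^{-b}\lesssim(x^j_n)^{-b}$ when $x^j_n\to\infty$ and dispersive $L^p$ decay when $|t^j_n|\to\infty$ --- are all correct and match what the paper delegates to \cite{FG2020}.
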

\begin{proof}
Indeed, for any odd function $\phi\in H^1(\R)$, we have
\begin{align*}
    \phi(x)=&\phi(x)|_{(-\infty,0]}+\phi(x)|_{[0,+\infty)}\\
    =&-\phi(-x)|_{[0,+\infty)}+\phi(x)|_{[0,+\infty)}.
\end{align*}
From \cite[Proposition 2.5]{CFGM2022} and \cite[Theorem 5.1]{FXC2011}, we can conclude that Lemma \ref{lem lpdo} is valid with the exception of (5). To prove (5), we have
\begin{equation*}
        E[\phi_n]-\sum^M_{j=1}E[e^{-it^j_n\Delta}\Psi^j_n]-E[W^M_n]=\frac{A_n}{\alpha+2}+o_n(1),
\end{equation*}
where
\begin{equation*}
        A_n\!=\!2\left\||x|^{-b}|\phi_n|^{\alpha+2}\right\|_{L^1(0,+\infty)}\!-\!2\sum^M_{j=1}\left\||x|^{-b}|e^{-it^j_n\Delta}\psi^j(\cdot-x^j_n)|^{\alpha+2}\right\|_{L^1}-\left\||x|^{-b}\left|W^M_n\right|^{\alpha+2}\right\|_{L^1}.
\end{equation*}
The rest of proof exactly proceeds as in \cite[Proposition 5.3]{FG2020},  we omit the details here.
\end{proof}

% We also need an additional property of linear profile decomposition, named energy Pythagorean expansion.
% \begin{lemma}\label{prop epe}
%     The linear profile decomposition has an additional following property:
% \begin{equation*}
%         E[\phi_n]=\sum^M_{j=1}E[e^{-it^j_n\Delta}\psi^j(\cdot-x^j_n)]+E[W^M_n]+o_n(1).
% \end{equation*}
% Moreover, the linear profile decomposition for odd function has same property:
% \begin{equation*}
%         E[\phi_n]=\sum^M_{j=1}E[e^{-it^j_n\Delta}\Psi^j_n]+E[W^M_n]+o_n(1).
% \end{equation*}
% \end{lemma}
% \begin{proof}

% \end{proof}

To complete the analysis, we also need the decay estimate of the equation \eqref{eqinls}. The following estimate can be found in \cite[Theorem 1.4]{CLZ2024}.
\begin{lemma}\label{lem decay}
 Let $u(t)\in H^1(\R)$ be a global odd solution of \eqref{eqinls}.  Then for any bounded interval $I\subset\R$,
 \begin{equation*}
         \lim_{t\rightarrow\infty}(\|u(t)\|_{L^2(I)}+\|u(t)\|_{L^\infty(I)})=0.
\end{equation*}
\end{lemma}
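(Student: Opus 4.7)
The plan is to derive a Morawetz-type spacetime estimate and combine it with the odd-function Poincar\'e inequality to obtain local time-integrability of $\|u(t)\|_{L^2(I)}^2$, then upgrade to pointwise decay via a Lipschitz-in-time argument.

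First, I would set up a Morawetz identity with a smooth, even, bounded, strictly convex weight $\phi \in C^\infty(\R)$ (for example $\phi(x) = \sqrt{1+x^2}$, whose derivatives are bounded and whose second derivative is everywhere positive). Computing the time derivative of $M(t) := \int \phi'(x)\,\mathrm{Im}\bigl(\bar u\,\nabla u\bigr)(t,x)\,dx$ using \eqref{eqinls} and integrating by parts yields an identity whose right-hand side contains the positive kinetic term $\int \phi''(x)|\nabla u|^2\,dx$, the favorable defocusing contribution $\int \phi''(x)|x|^{-b}|u|^{\alpha+2}\,dx$, and a singular correction of the form $\int \phi'(x)\,\mathrm{sgn}(x)\,|x|^{-(b+1)}|u|^{\alpha+2}\,dx$ arising when the derivative lands on $|x|^{-b}$. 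The oddness of $u$, combined with Lemma \ref{Lem Hardy}, is used to absorb this singular correction into the kinetic term. Since $|M(t)| \leq \|\phi'\|_{L^\infty}\|u(t)\|_{L^2}\|\nabla u(t)\|_{L^2} \leq C$ uniformly in $t$ by conserved mass and energy, integrating in time gives
$$\int_0^\infty \!\!\int \phi''(x)|\nabla u(t,x)|^2\,dx\,dt \;\leq\; C.$$
Since $\phi'' > 0$ on $\R$, this yields $\int_0^\infty \|\nabla u(t)\|_{L^2(I_0)}^2\,dt < \infty$ for every bounded $I_0 \subset \R$. Because $u(t,0) = 0$, the Poincar\'e inequality gives $\|u(t)\|_{L^2(I_0)} \lesssim_{I_0}\|\nabla u(t)\|_{L^2(I_0)}$ whenever $0 \in I_0$ (and one enlarges $I_0$ otherwise), so that $\int_0^\infty \|u(t)\|_{L^2(I_0)}^2\,dt < \infty$ for every bounded $I_0$.

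To upgrade this spacetime integrability to a pointwise statement, I would verify that for any $\chi \in C_c^\infty(\R)$ the map $t \mapsto \int \chi(x)|u(t,x)|^2\,dx$ is Lipschitz with constant depending only on $\|\chi'\|_{L^\infty}$ and $\|u\|_{L^\infty_t H^1_x}$, via the identity
$$\frac{d}{dt}\int \chi(x)|u(t,x)|^2\,dx \;=\; -2\,\mathrm{Im}\int \chi'(x)\,\bar u(t,x)\,\nabla u(t,x)\,dx.$$
Any function that is simultaneously integrable on $[0,\infty)$ and Lipschitz continuous must vanish at infinity; applied to $\chi$ chosen as a smooth bump equal to $1$ on $I$ and supported in an enlarged interval, this gives $\|u(t)\|_{L^2(I)} \to 0$. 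The $L^\infty(I)$-decay then follows from the one-dimensional Sobolev estimate $\|u(t)\|_{L^\infty(I)}^2 \lesssim \|u(t)\|_{L^2(I)}\|\nabla u(t)\|_{L^2(I)}$ combined with the uniform $H^1$-bound from mass and energy conservation.

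The main obstacle is the Morawetz computation in the first step: producing a sign-definite integrand when the nonlinearity carries the singular weight $|x|^{-b}$, whose derivative generates the non-integrable factor $|x|^{-(b+1)}$ at the origin. This is precisely where the odd structure becomes indispensable — Lemma \ref{Lem Hardy} applied to $u(t,\cdot)$ permits the $|x|^{-(b+1)}|u|^{\alpha+2}$ contribution to be absorbed by the kinetic term $\int \phi''(x)|\nabla u|^2\,dx$, yielding the desired non-negative integrand. The detailed execution of this computation is carried out in \cite{CLZ2024}.
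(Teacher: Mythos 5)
Your overall strategy coincides with the paper's: the paper's proof of Lemma \ref{lem decay} is itself only a sketch that invokes the Virial--Morawetz functional $I(u(t))=\Im\int\phi\, u\,\overline{u}_x\,dx$ with the bounded odd multiplier $\phi(x)=\tfrac{x}{1+|x|}$ and defers all details to \cite{CLZ2024}, and your $M(t)=\int\phi'(x)\,\mathrm{Im}(\bar u\,\nabla u)\,dx$ with $\phi'(x)=x/\sqrt{1+x^2}$ is the same device with an equivalent weight. Your upgrade steps (Lipschitz-in-time local mass plus time-integrability implies vanishing, then the one-dimensional interpolation inequality for the $L^\infty$ claim) are correct and standard.

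However, your account of the Morawetz identity has a gap and a miscalibration. First, the identity for $\tfrac{d}{dt}M(t)$ contains a fourth term, of the form $-c\int \phi^{(4)}(x)\,|u|^2\,dx$ (the term produced when all derivatives fall on the weight), which is \emph{not} sign-definite for a smooth bounded multiplier in one dimension and whose time integral is not controlled by mass/energy conservation alone. This is precisely the obstruction that makes one-dimensional Morawetz estimates delicate; without handling it, the assertion that ``integrating in time gives $\int_0^\infty\int\phi''|\nabla u|^2\,dx\,dt\le C$'' does not follow from the boundedness of $M(t)$. Second, the singular correction $\int\phi'(x)\,\mathrm{sgn}(x)\,|x|^{-(b+1)}|u|^{\alpha+2}\,dx$ does not need to be ``absorbed into the kinetic term'': since $\phi'$ is odd and increasing, $\phi'(x)\,\mathrm{sgn}(x)\ge 0$, and in the defocusing case this term enters the identity with a favorable sign, so it can simply be discarded; the oddness (or Hardy/the vanishing $u(t,0)=0$) is only needed to ensure the term is finite near the origin, not to fix its sign. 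Both points lie inside the computation you (like the paper) delegate to \cite{CLZ2024}, but as written your sketch asserts a spacetime bound that your listed terms do not yield.
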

\begin{proof}%[Sketch of proof]
    For the reader's convenience, we provide a sketch of proof. In \cite{CLZ2024}, the authors utilize a Virial-Morawetz identity
$$I(u(t)):=\Im\int_{\R}\phi(x)u(t,x)\overline{u}_x(t,x)dx,$$
    by choosing $\phi(x)=\frac{x}{1+|x|}$ to show Lemma \ref{lem decay}. We refer readers to \cite{CLZ2024} for more details.
\end{proof}

\section{Proof of Theorem \ref{T1}}

In this section, our aim is to prove Theorem \ref{T1}. The approach we take is analogous to the method used for the classical NLS equation and the higher-dimensional INLS equation, see \cite{FG2017,FG2020,CFGM2022,Guz2017,Cz2003}.

\begin{proof}[ Proof of Theorem \ref{T1}]
 To prove (1), we first define
 \begin{equation*}
        X:=C([-T,T],H^1(\R))\cap L^q([-T,T],W^{1,r}(\R)),\quad \forall~(q,r)\in\mathcal{A}_0,
\end{equation*}
and
\begin{equation*}
        \|u\|_X:=\|u\|_{S(I,L^2)}+\|\nabla u\|_{S(I,L^2)}.
    \end{equation*}
It is easy to prove that
\begin{equation*}
B_M:=\{u\in X:~\|u\|_X\leq M,~u ~\text{is an odd function}\}
\end{equation*}
equipped with the metric $d(u,v):=\|u-v\|_{S(I,L^2)}$ is a complete metric space.

Next, let us show that
\begin{equation*}%\label{def D}
        D(u):=e^{it\Delta}u_0+i\int^t_0e^{i(t-s)\Delta}(-|x|^{-b}|u|^\alpha u)ds
    \end{equation*}
is a contraction map on the metric space $(B_M,d)$ for some $T$, $M>0$. By applying Lemmas \ref{str est} and \ref{lem Tnlest}, we obtain 
\begin{align*}
    \|D(u)\|_X\leq& C\left(\|u_0\|_{H^1}+\||x|^{-b}|u|^\alpha u\|_{S'(I,L^2)}+\|\nabla|x|^{-b}|u|^\alpha u\|_{S'(I,L^2)}\right)\notag\\
        \leq&  C\left(\|u_0\|_{H^1}+T^\theta\|u\|^\alpha_{L^\infty_t(I)H^1_x}\|u\|_X\right).
\end{align*}
Then, $u\in B_M$ implies that
\begin{equation*}
         \|D(u)\|_X\leq C(\|u_0\|_{H^1}+T^\theta M^{\alpha+1}).
\end{equation*}
Next, we set $M=2C\|u_0\|_{H^1}$ and choose $T>0$ such that
\begin{equation*}
        CT^\theta M^\alpha<\frac{1}{4}.
    \end{equation*}
With these choices, we get $D(u)\in B_M$.

By the same argument as above, we can deduce that
\begin{equation*}
        d(D(u),D(v))<\frac{1}{2}d(u,v).
\end{equation*}
 Therefore, by the contraction mapping theorem, there exists a unique fixed point $u\in B_M$. The proof of continuous dependence follows a similar reasoning as above, and thus is omitted here.

To prove (2), we define 
\begin{align*}
        B:=\Big\{u~\text{is an odd function} :\|u\|_{S(\dot{H}^{s_c})}\leq2\left\|e^{it\Delta}u_0\right\|_{S(\dot{H}^{s_c})} \\~\text{and}~\|u\|_{S(L^2)}+\|\nabla u\|_{S(L^2)}\leq2C\|u_0\|_{H^1}\Big\},
\end{align*}
which is equipped with the metric
\begin{equation*}
    d(u,v):=\|u-v\|_{S(L^2)}+\|u-v\|_{S(\dot{H}^{s_c})}.
\end{equation*}
From Lemma \ref{lem nlest}, and using a standard argument (see \cite[Proposition 4.5]{FG2017}), we can establish the  global theory. We omit specific details of this proof for brevity.

To prove (3), we note that since  $u$ is an odd function, a similar argument as presented in \cite[Proposition 1.4]{FG2020} and \cite[Theorem 1.2 (v)]{AT2021} allows us to easily establish (3). We omit the details for brevity.

To prove (4), we start by defining the interval $I_T=[T,+\infty)$ for $T>>1$. We then define 
\begin{equation*}
        G(w)(t):=i\int^{+\infty}_te^{i(t-s)\Delta}(|x|^{-b}|w+e^{it\Delta}\phi|^\alpha(w+e^{it\Delta}\phi))(s)ds,\quad t\in I_T.
\end{equation*}
We also introduce 
\begin{equation*}
        B(T,\rho):=\{w\in C(I_T,H^1(\R)):\|w\|_T\leq\rho,~w~\text{is an odd function}\},
\end{equation*}
where
\begin{equation*}
\|w\|_T=\|w\|_{S(I_T,\dot{H}^{s_c})}+\|w\|_{S(I_T,L^2)}+\|\nabla w\|_{S(I_T,L^2)}.
\end{equation*}
It is important to note that $w+e^{it\Delta}\phi$ is an odd function, therefore, by applying Lemma \ref{lem nlest}, we obtain
\begin{equation*}
       \|G(w)\|_T\leq\|w+e^{it\Delta}\phi\|^\theta_{L^\infty_{I_T}H^1_x}\|w+e^{it\Delta}\phi\|^{\alpha-\theta}_{S(I_T,\dot{H}^{s_c})}\|w+e^{it\Delta}\phi\|_T.
\end{equation*}
The rest of the proof follows a similar approach used in \cite[Proposition 5.3]{FG2017} and \cite[Proposition 4.15]{FG2020}. We  omit the details here. This completes the proof of Theorem \ref{T1}.
\end{proof}

\section{Stability result}

In this section, we present the stability result in the one-dimensional case that is divided in two
parts: short-time perturbation and long-time perturbation. For the results pertaining to higher dimensions, please refer to \cite{FG2017, FG2020, CFGM2022, Dinh2021-3}.
\begin{lemma}[Short-time perturbation]\label{lem short}
Let $I\subset \R$ be a time interval containing zero and  $4-2b<\alpha<\infty$.  Assume that $\tilde{u}=\tilde{u}(t,x)$ is a solution to
\begin{equation*}
        i\partial_t\tilde{u}+\Delta\tilde{u}-|x|^{-b}|\tilde{u}|^\alpha\tilde{u}=e,
\end{equation*}
with initial data $\tilde{u}_0\in H^1(\R)$, where both $\tilde{u}_0$ and $e$ are odd functions. Assume that
    \begin{equation*}
        \sup_{t\in I}\|\tilde{u}(t)\|_{H^1}\leq M~ \text{and}~\|\tilde{u}\|_{S(I,\dot{H}^{s_c})}\leq\eta,
    \end{equation*}
for some $M>0$, and sufficiently small $\eta>0$ depending on $M$. Additionally, let $u_0\in H^1(\R)$ be an odd function that satisfies
\begin{equation*}
        \|u_0-\tilde{u}_0\|_{H^1}\leq M',\quad \text{for}~ M'>0.
\end{equation*}
We also assume that
\begin{equation*}
    \left\|e^{it\Delta}(u_0-\tilde{u}_0)\right\|_{S(I,\dot{H}^{s_c})}\leq\epsilon,
\end{equation*}
and
\begin{equation*}
    \|e\|_{S'(I,L^2)}+\|\nabla e\|_{S'(I,L^2)}+\|e\|_{S'(I,\dot{H}^{-s_c})}\leq\epsilon.
\end{equation*}
Then, there exists a constant $\epsilon_0(M,M')>0$  such that if $0<\epsilon<\epsilon_0$, there exists a unique odd solution  $u$ to equation \eqref{eqinls} on $I\times\R$ with initial data $u_0$ that satisfies
\begin{equation}\label{eqs1}
    \|u-\tilde{u}\|_{S(I,\dot{H}^{s_c})}\lesssim\epsilon
\end{equation}
and
\begin{equation}\label{eqs2}
        \|u\|_{S(I,L^2)}+\|\nabla u\|_{S(I,L^2)}+\|u\|_{S(I,\dot{H}^{s_c})}\lesssim 1.
\end{equation}
\end{lemma}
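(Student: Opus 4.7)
The plan is to set up a fixed-point problem for the difference $w := u - \tilde{u}$, which satisfies the Duhamel identity
\[
w(t) = e^{it\Delta}(u_0-\tilde{u}_0) - i\int_0^t e^{i(t-s)\Delta}\bigl[F(\tilde{u}+w)-F(\tilde{u})-e\bigr](s)\,ds,
\]
with $F(u)=|x|^{-b}|u|^\alpha u$. Call the right side $\Phi(w)$. The contraction will be run on the ball
\[
B := \bigl\{w : w \text{ odd},\ \|w\|_{S(I,\dot{H}^{s_c})}\leq 2C\epsilon,\ \|w\|_{S(I,L^2)}+\|\nabla w\|_{S(I,L^2)}\leq 2C(M+M')\bigr\},
\]
with the distance $d(w_1,w_2):=\|w_1-w_2\|_{S(I,L^2)}+\|w_1-w_2\|_{S(I,\dot{H}^{s_c})}$. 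As a preliminary, applying the Duhamel formula and the Strichartz estimate of Lemma \ref{str est} to $\tilde{u}$ itself, using \eqref{est:line:2}--\eqref{est:line:3} together with $\|\tilde{u}\|_{L^\infty_t H^1_x}\leq M$ and $\|\tilde{u}\|_{S(\dot{H}^{s_c})}\leq \eta$, the smallness of $\eta$ lets me absorb the nonlinear contribution and yields $\|\tilde{u}\|_{S(I,L^2)}+\|\nabla\tilde{u}\|_{S(I,L^2)}\lesssim M+\epsilon$.

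For $\Phi$ itself, the linear driving terms satisfy $\|e^{it\Delta}(u_0-\tilde{u}_0)\|_{S(I,\dot{H}^{s_c})}\leq\epsilon$ and $\|e^{it\Delta}(u_0-\tilde{u}_0)\|_{S(I,L^2)}+\|\nabla e^{it\Delta}(u_0-\tilde{u}_0)\|_{S(I,L^2)}\lesssim M'$ by Strichartz and the hypothesis. The nonlinear piece rests on the pointwise bound $|F(\tilde{u}+w)-F(\tilde{u})|\lesssim |x|^{-b}(|\tilde{u}|^\alpha+|w|^\alpha)|w|$ and its gradient analogue, where $\nabla$ hitting $|x|^{-b}$ produces the singular factor $|x|^{-b-1}$ acting on the odd function $\tilde{u}+w$. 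Writing $F(\tilde{u}+w)-F(\tilde{u})=\int_0^1\partial_\lambda F(\tilde{u}+\lambda w)\,d\lambda$ reduces every summand to the trilinear form controlled by Lemma \ref{lem nlest} and the variant \eqref{rem nlest}, giving schematically
\[
\|F(\tilde{u}+w)-F(\tilde{u})\|_{S'(I,\dot{H}^{-s_c})}\lesssim \bigl[M^\theta\eta^{\alpha-\theta}+(M+M')^\theta\|w\|_{S(\dot{H}^{s_c})}^{\alpha-\theta}\bigr]\|w\|_{S(\dot{H}^{s_c})},
\]
with parallel bounds in $S'(I,L^2)$ and for the gradient, the $|x|^{-b-1}$ contribution being tamed by the odd-function Hardy inequality (Lemma \ref{Lem Hardy}).

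Plugging these into Strichartz shows, for $w\in B$, that $\|\Phi(w)\|_{S(I,\dot{H}^{s_c})}\leq C\epsilon+C[M^\theta\eta^{\alpha-\theta}+(M+M')^\theta(2C\epsilon)^{\alpha-\theta}](2C\epsilon)$, and an analogous inequality bounds $\|\Phi(w)\|_{S(I,L^2)}+\|\nabla\Phi(w)\|_{S(I,L^2)}$ by $C(M+M')$ plus a small multiplicative perturbation. Choosing $\eta=\eta(M)$ so that $M^\theta\eta^{\alpha-\theta}$ is tiny and then $\epsilon_0=\epsilon_0(M,M')$ so that the remaining factor is tiny, both ball constraints close. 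An identical computation on the difference $w_1-w_2$ yields $d(\Phi(w_1),\Phi(w_2))\leq\tfrac{1}{2}d(w_1,w_2)$. Oddness of $\Phi(w)$ is automatic because $e^{it\Delta}$ preserves odd parity and $F$ maps odd functions to odd functions, while $e$ and $u_0-\tilde{u}_0$ are odd by hypothesis. The unique fixed point $w$ provides $u=\tilde{u}+w$, the first ball constraint gives \eqref{eqs1}, and combining the second ball constraint with the preliminary bound on $\tilde{u}$ gives \eqref{eqs2}.

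The main obstacle is the gradient-of-difference estimate $\|\nabla[F(\tilde{u}+w)-F(\tilde{u})]\|_{S'(I,L^2)}$. Unlike \eqref{est:line:3}, which is stated for a single odd argument, here $\nabla|x|^{-b}$ produces $|x|^{-b-1}$ that must be distributed across a trilinear difference while simultaneously being absorbed via Hardy. The Taylor expansion route above is the cleanest remedy: every summand of $\int_0^1\partial_\lambda F(\tilde{u}+\lambda w)\,d\lambda$ contains a factor which is odd in $x$ (either $\tilde{u}+\lambda w$ or $w$), so that \eqref{rem nlest} applies and Lemma \ref{Lem Hardy} can be invoked at the correct slot, avoiding any loss from the $|x|^{-b-1}$ singularity.
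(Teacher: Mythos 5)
Your proposal is correct and follows essentially the same route as the paper: a contraction argument for the difference $w=u-\tilde u$ on a ball of radius $O(\epsilon)$ in $S(I,\dot H^{s_c})$ and $O(M')$ in the $L^2$-Strichartz norms, with the preliminary bound $\|\tilde u\|_{S(I,L^2)}+\|\nabla\tilde u\|_{S(I,L^2)}\lesssim M$ obtained from the smallness of $\eta$, and with the nonlinear and gradient terms (including the $|x|^{-b-1}$ singularity) handled exactly via Lemma \ref{lem nlest}, the variant \eqref{rem nlest}, and the odd-function Hardy inequality. The only cosmetic differences are your slightly larger $L^2$-ball radius $2C(M+M')$ versus the paper's $3CM'$ and your Taylor-expansion presentation of the gradient-of-difference bound, which the paper instead writes as a direct pointwise estimate on $|\nabla H|$; both yield the same trilinear terms.
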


\begin{proof}
We claim that if $\|\tilde{u}\|_{S(I,\dot{H}^{s_c})}\leq\eta$ for some sufficiently small $\eta>0$ depending on $M$, then
\begin{equation}\label{claim:short}
        \|\tilde{u}\|_{S(I,L^2)}+\|\nabla\tilde{u}\|_{S(I,L^2)}< M.
    \end{equation}
Next, we prove the existence of a solution for the following problem
\begin{equation*}
\left\{\begin{aligned}
			&i\partial_tw+\Delta w-H(x,\tilde{u},w)+e=0,\\
			&w(0,x)=u_0(x)-\tilde{u}_0(x),
		\end{aligned}\right.
\end{equation*}
where $H(x,\tilde{u},w)=|x|^{-b}(|\tilde{u}+w|^\alpha(\tilde{u}+w)-|\tilde{u}|^\alpha\tilde{u})$. Let
\begin{align*}
        D(w)(t):=e^{it\Delta}w_0+i\int^t_0e^{i(t-s)\Delta}(-H+e)(s)ds,
    \end{align*}
and define
\begin{align*}
        B(\rho,K)=\{w\in C(I,H^1(\R)): \|w\|_{S(I,\dot{H}^{s_c})}\leq\rho,\|w\|_{S(I,L^2)}+\|\nabla w\|_{S(I,L^2)}\leq K\}.
\end{align*}
By Lemma \ref{str est}, we have
\begin{align}
       % \begin{split}
            \|D(w)\|_{S(I,L^2)}&\lesssim\|w_0\|_{L^2}+\|H(\cdot,\tilde{u},w)\|_{S'(I,L^2)}+\|e\|_{S'(I,L^2)},\label{eq D1}\\
            \|\nabla D(w)\|_{S(I,L^2)}&\lesssim\|\nabla w_0\|_{L^2}+\|\nabla H(\cdot,\tilde{u},w)\|_{S'(I,L^2)}+\|\nabla e\|_{S'(I,L^2)},\label{eq D2}\\
            \|D(w)\|_{S(I,\dot{H}^{s_c})}&\lesssim\|e^{it\Delta}w_0\|_{S(I,\dot{H}^{s_c})}+\|H(\cdot,\tilde{u},w)\|_{S'(I,\dot{H}^{-s_c})}+\|e\|_{S'(I,\dot{H}^{-s_c})}.\label{eq D3}
       % \end{split}
    \end{align}
Notice that $|H|\leq|x|^{-b}(|\tilde{u}|^\alpha|w|+|w|^{\alpha+1})$, so
\begin{align*}
    \left\|H(\cdot,\tilde{u},w)\right\|_{S'(I,X)}\leq\left\||x|^{-b}|\tilde{u}|^\alpha w\right\|_{S'(I,X)}+\left\||x|^{-b}|w|^\alpha w\right\|_{S'(I,X)},
\end{align*}
where  $X=L^2 $ or $\dot{H}^{-s_c}$. Using Lemma \ref{lem nlest}, we obtain
\begin{align}\label{eq H13}
     \|H(\cdot,\tilde{u},w)\|_{S'(I,X)}\lesssim\left(\|\tilde{u}\|^\theta_{L^\infty_IH^1_x}\|\tilde{u}\|^{\alpha-\theta}_{S(I,\dot{H}^{s_c})}+\|w\|^\theta_{L^\infty_IH^1_x}\|w\|^{\alpha-\theta}_{S(I,\dot{H}^{s_c})}\right)\|w\|_{S(I,X)}.
\end{align}
Similarly, since
\begin{align*}
|\nabla H| \lesssim|x|^{-b-1} \left(|\tilde{u}|^\alpha \!+\!|w|^\alpha\right)|w| \!+\!|x|^{-b}\left(|\tilde{u}|^\alpha + |w|^\alpha\right)|\nabla w|\!+\!|x|^{-b}\left(|\tilde{u}|^{\alpha-1}\!+\!|w|^{\alpha-1}\right)|w||\nabla\tilde{u}|,
\end{align*}
by Lemma \ref{lem nlest} and \eqref{rem nlest}, we have
\begin{equation}\label{eq H2}
    \begin{aligned}
     &\|\nabla H(\cdot,\tilde{u},w)\|_{S'(I,L^2)}\\
     \lesssim&\left\||x|^{-b}(|\tilde{u}|^\alpha+|w|^\alpha)\frac{|w|}{|x|}\right\|_{S'(I,L^2)}+\left\||x|^{-b}(|\tilde{u}|^\alpha+|w|^\alpha)|\nabla w|\right\|_{S'(I,L^2)}\\
     &+\left\||x|^{-b}|\tilde{u}|^{\alpha-1}|w||\nabla \tilde{u}|\right\|_{S'(I,L^2)}+\left\||x|^{-b}|w|^\alpha|\nabla\tilde{u}|\right\|_{S'(I,L^2)}\\
     \lesssim&\left(\|\tilde{u}\|^\theta_{L^\infty_IH^1_x}\|\tilde{u}\|^{\alpha-\theta}_{S(I,\dot{H}^{s_c})}+\|w\|^\theta_{L^\infty_IH^1_x}\|w\|^{\alpha-\theta}_{S(I,\dot{H}^{s_c})}\right)\|\nabla w\|_{S(I,L^2)}\\
     &+\|w\|^\theta_{L^\infty_IH^1_x}\|w\|^{\alpha-\theta}_{S(I,\dot{H}^{s_c})}\|\nabla\tilde{u}\|_{S(I,L^2)}+\|\tilde{u}\|^\theta_{L^\infty_IH^1_x}\|\tilde{u}\|^{\alpha-1-\theta}_{S(I,\dot{H}^{s_c})}\|w\|_{S(I,\dot{H}^{s_c})}\|\nabla\tilde{u}\|_{S(I,L^2)}.
\end{aligned}\end{equation}
If $w\in B(\rho,K)$, it yields from \eqref{eq D1},\eqref{eq D2},\eqref{eq D3},\eqref{eq H13} and \eqref{eq H2} that
\begin{align*}
\|D(w)\|_{S(I,L^2)}\leq& C\left(M'+\epsilon+\left(M^\theta\eta^{\alpha-\theta}+K^\theta\rho^{\alpha-\theta}\right)K\right),\\
    \|D(w)\|_{S(I,\dot{H}^{s_c})}\leq& C\left(\epsilon+\left(M^\theta\eta^{\alpha-\theta}+K^\theta\rho^{\alpha-\theta}\right)\rho\right),
\end{align*}
and
\begin{align*}
    \|\nabla D(w)\|_{S(I,L^2)}\leq C\left(M'+\epsilon+\left(M^\theta\eta^{\alpha-\theta}+K^\theta\rho^{\alpha-\theta}\right)K
    +\left(M^\theta\eta^{\alpha-1-\theta}+K^\theta\rho^{\alpha-1-\theta}\right)\rho M\right).
\end{align*}
Taking $\rho=2C\epsilon,~K=3CM'$ and choosing $\eta,\epsilon_0$ sufficiently small such that
\begin{align*}
    C\left(M^\theta\eta^{\alpha-\theta}+K^\theta\rho^{\alpha-\theta}\right)
    <\frac{1}{3}~~\text{and}~~C\left(\epsilon+\left(M^\theta\eta^{\alpha-1-\theta}+K^\theta\rho^{\alpha-1-\theta}\right)\rho M\right)<\frac{K}{3},
\end{align*}
these imply that
\begin{align*}
\|D(w)\|_{S(I,\dot{H}^{s_c})}\leq\rho~~\text{and}~~\|D(w)\|_{S(I,L^2)}+\|\nabla D(w)\|_{S(I,L^2)}\leq K.
\end{align*}
Thus, $D$ is well defined on $B(\rho,K)$. The contraction property can be obtained by a similar argument.  Consequently, there is a unique solution $w$ on $I\times\R$ satisfying
\begin{align*}
    \|w\|_{S(I,\dot{H}^{s_c})}\lesssim\epsilon~~\text{and}~~\|w\|_{S(I,L^2)}+\|\nabla w\|_{S(I,L^2)}\lesssim M'.
\end{align*}
In particular, $u=\tilde{u}+w$ is a solution to \eqref{eqinls} such that \eqref{eqs1} and \eqref{eqs2} hold.

Now let us show that claim \eqref{claim:short} holds. Indeed, from Lemmas  \ref{str est} and \ref{lem nlest},  we have
 \begin{align*} \|\nabla\tilde{u}\|_{S(I,L^2)}\lesssim&\left\|\nabla\tilde{u}_0\right\|_{L^2}+\left\|\nabla\left(|x|^{-b}|\tilde{u}|^\alpha\tilde{u}\right)\right\|_{S'(I,L^2)}+\|\nabla e\|_{S'(I,L^2)}\\
 \leq& C\left( M+\epsilon+M^\theta\eta^{\alpha-\theta}\|\nabla\tilde{u}\|_{S(I,L^2)}\right).
\end{align*}
Taking $\eta$ small enough such that $CM^\theta\eta^{\alpha-\theta}<\frac{1}{2}$, it concludes that
\begin{align*}
    \|\nabla\tilde{u}\|_{S(I,L^2)}\lesssim M.
\end{align*}
Similarly, we can also get $\|\tilde{u}\|_{S(I,L^2)}\lesssim M$, which implies the claim. This completes the proof.
\end{proof}

Next, we establish a long-time perturbation result for equation \eqref{eqinls} removing the assumption that $\|\tilde{u}\|_{S(I,\dot{H}^{s_c})}$ is small as in the previous lemma.
\begin{lemma}[Long-time perturbation]\label{prop sta}
Let $I\subset \R$ be a time interval containing zero and $4-2b<\alpha<\infty$. Assume that $\tilde{u}$, defined on $I\times\R$, is a solution to 
\begin{equation*}
        i\partial_t\tilde{u}+\Delta\tilde{u}-|x|^{-b}|\tilde{u}|^\alpha\tilde{u}=e,
\end{equation*}
with initial data $\tilde{u}_0\in H^1(\R)$, where both $\tilde{u}_0$ and $e$ are odd functions, and there exist constants $M$, $L>0$ such that
\begin{align*}
        \sup_{t\in I}\|\tilde{u}(t)\|_{H^1}\leq M~~ \text{and}~~\|\tilde{u}\|_{S(I,\dot{H}^{s_c})}\leq L.
\end{align*}
Let $u_0\in H^1(\R)$ be an odd function that satisfies
\begin{align*}
        \|u_0-\tilde{u}_0\|_{H^1}\leq M'
\end{align*}
for some $M'>0$. Additionally, assume that
\begin{align*}
        \|e^{it\Delta}(u_0-\tilde{u}_0)\|_{S(I,\dot{H}^{s_c})}\leq\epsilon,
\end{align*}
and
\begin{align*}
        \|e\|_{S'(I,L^2)}+\|\nabla e\|_{S'(I,L^2)}+\|e\|_{S'(I,\dot{H}^{-s_c})}\leq\epsilon.
\end{align*}
Under these conditions, there exists  $\epsilon_1(M,M',L)>0$ such that if $0<\epsilon<\epsilon_1$, then there exists a unique odd solution $u$ to \eqref{eqinls} on $I\times\R$ with initial data $u_0$. This solution satisfies the following estimates
\begin{align*}%\label{eql1}
        \|u-\tilde{u}\|_{S(I,\dot{H}^{s_c})}\lesssim_{M,M',L}\epsilon
\end{align*}
and
\begin{align*}%\label{eql2}
        \|u\|_{S(I,L^2)}+\|\nabla u\|_{S(I,L^2)}+\|u\|_{S(I,\dot{H}^{s_c})}\lesssim_{M,M',L} 1.
    \end{align*}
\end{lemma}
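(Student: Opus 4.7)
The plan is to reduce the long-time perturbation to the short-time version (Lemma \ref{lem short}) via a standard subdivision argument. Since $\|\tilde u\|_{S(I,\dot H^{s_c})}\le L$ but may not be small, I would partition $I$ into finitely many consecutive subintervals $I_j=[t_j,t_{j+1}]$, $j=0,1,\ldots,J-1$, chosen so that on each $I_j$ one has $\|\tilde u\|_{S(I_j,\dot H^{s_c})}\le\eta$, where $\eta=\eta(M)$ is the smallness threshold furnished by Lemma \ref{lem short}. The number $J=J(L,M)$ of such subintervals depends only on $L$ and $M$. On each $I_j$, I would then solve the difference equation $w=u-\tilde u$ driven by initial data $w(t_j)=u(t_j)-\tilde u(t_j)$ and forcing $e$, provided the pair of smallness hypotheses required by Lemma \ref{lem short} — namely a bound on $\|w(t_j)\|_{H^1}$ and on $\|e^{i(t-t_j)\Delta}w(t_j)\|_{S(I_j,\dot H^{s_c})}$ together with the bound on $e$ — can be propagated from step $j$ to step $j+1$.

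The inductive step works as follows. Assume at time $t_j$ we have $\|w(t_j)\|_{H^1}\le M_j$ and $\|e^{i(t-t_j)\Delta}w(t_j)\|_{S(I_j,\dot H^{s_c})}\le\varepsilon_j$ for some constants $M_j,\varepsilon_j$ depending on $M,M',L$. Applying Lemma \ref{lem short} on $I_j$ yields a unique odd solution $u$ on $I_j$ with
\[
\|u-\tilde u\|_{S(I_j,\dot H^{s_c})}\lesssim\varepsilon_j,\qquad \|u\|_{S(I_j,L^2)}+\|\nabla u\|_{S(I_j,L^2)}+\|u\|_{S(I_j,\dot H^{s_c})}\lesssim 1.
\]
To propagate to $I_{j+1}$, I would write $w$ via Duhamel's formula starting from $t_{j+1}$, namely
\[
e^{i(t-t_{j+1})\Delta}w(t_{j+1})=e^{i(t-t_j)\Delta}w(t_j)+i\int_{t_j}^{t_{j+1}}e^{i(t-s)\Delta}\bigl(H(\cdot,\tilde u,w)-e\bigr)(s)\,ds,
\]
with $H$ as in the proof of Lemma \ref{lem short}, and then apply Strichartz together with the nonlinear estimates in Lemma \ref{lem nlest} (and the variant \eqref{rem nlest} for the gradient) exactly as in the short-time proof to bound $\|e^{i(t-t_{j+1})\Delta}w(t_{j+1})\|_{S(I_{j+1},\dot H^{s_c})}$ and $\|w(t_{j+1})\|_{H^1}$ in terms of $M_j$, $\varepsilon_j$, and $\epsilon$. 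This produces a recursion of the form $M_{j+1}\le C_0(M,M',L)(M_j+\epsilon)$ and $\varepsilon_{j+1}\le C_0(M,M',L)(\varepsilon_j+\epsilon)$, with $M_0=M'$ and $\varepsilon_0=\epsilon$.

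Iterating $J$ times gives $M_j,\varepsilon_j\le C_0^{\,J}(M,M',L)\,\epsilon$ for all $j\le J$, which remains controlled as long as $\epsilon<\epsilon_1(M,M',L)$ is chosen so that $C_0^{\,J}\epsilon$ stays below the smallness threshold demanded at each application of Lemma \ref{lem short}. Summing the resulting bounds across the subintervals $I_0,\ldots,I_{J-1}$ yields the two claimed estimates on $I$. Oddness is preserved throughout because $u_0$, $\tilde u_0$, and $e$ are odd, the nonlinearity $|x|^{-b}|u|^\alpha u$ preserves oddness, and the Schr\"odinger propagator commutes with the reflection $x\mapsto -x$; hence at every step the fixed point produced by Lemma \ref{lem short} is odd.

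The main obstacle is the exponential-in-$J$ growth of the constants from iterating the short-time lemma: the required smallness $\epsilon_1$ must be chosen after $J=J(L,M)$, and one has to check carefully at each step that the Strichartz/Duhamel propagation of the difference does not pick up a factor forcing the smallness hypothesis $\|e^{i(t-t_{j+1})\Delta}w(t_{j+1})\|_{S(I_{j+1},\dot H^{s_c})}\le\varepsilon_{j+1}$ to fail. This is where Lemma \ref{lem nlest} (particularly the factor $\|\tilde u\|^{\alpha-\theta}_{S(I_j,\dot H^{s_c})}\le\eta^{\alpha-\theta}$) is crucial, since it ensures the implicit constant in the recursion does not blow up as $\tilde u$ becomes large globally in Strichartz norm — only its local-on-$I_j$ Strichartz norm enters, which is kept below $\eta$ by construction.
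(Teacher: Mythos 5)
Your proposal is correct and follows essentially the same route as the paper, which proves this lemma by combining the short-time perturbation result (Lemma \ref{lem short}) with the standard interval-subdivision and induction argument of \cite[Proposition 4.14]{FG2020}; your write-up simply makes explicit the Duhamel propagation of the smallness hypotheses and the exponential-in-$J$ bookkeeping that the paper leaves to the cited reference. No gaps.
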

\begin{proof}
 By combining Lemma \ref{lem short} with a similar argument as presented in \cite[Proposition 4.14]{FG2020}, we can derive the desired result. Here we omit the details.
\end{proof}

\section{Existence and compactness of a critical element}

In this section, we will prove that if Theorem \ref{T2} does not hold, then one can find a minimal blow-up solution that obeys the compactness property. We will follow the approach outlined in \cite{CFGM2022, KM2006} to complete the proof.
Before starting, we introduce the following quantities that possess a scaling-invariant property
$$M[u_\lambda]^{1-s_c}E[u_\lambda]^{s_c}=M[u]^{1-s_c}E[u]^{s_c},~~~~\|\nabla u_\lambda\|_{L^2_x}^{s_c}\|u_\lambda\|_{L^2_x}^{1-s_c}=\|\nabla u\|_{L^2_x}^{s_c}\|u\|_{L^2_x}^{1-s_c}.$$
These quantities were first introduced in \cite{HR2008}, also see \cite{DHR2008}, in their study of the cubic NLS equation in three dimensions. They are  essential in distinguishing between blow-up and global solutions. In our analysis, these quantities also play a crucial role.

\begin{prop}\label{prop c}
Suppose that Theorem \ref{T2} does not hold. Then there exists a critical threshold $\delta_c>0$ and an odd function $u_{c,0}\in H^1(\R)$ such that $u_c$ is a solution of \eqref{eqinls} with initial data $u_{c,0}$, uniformly bounded in $H^1(\R)$ and satisfies the following properties

(1) $M[u_c]=1$;

(2) $E[u_c]^{s_c}=\delta_c$;

(3) $\|u_c\|_{S(\dot{H}^{s_c})}=+\infty$.

\noindent Furthermore, the orbit $\{u_c(t):~t\in\R\}$ is pre-compact in $H^1(\R)$.
\end{prop}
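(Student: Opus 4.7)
The plan is to follow the Kenig--Merle concentration-compactness/rigidity roadmap adapted to the one-dimensional odd-INLS setting.

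\emph{Step 1 (Defining $\delta_c$).} I set
\[
\delta_c := \inf\bigl\{E[\phi]^{s_c}:\, \phi\in H^1(\R) \text{ odd},\ M[\phi]=1,\ \|u^{\phi}\|_{S(\dot{H}^{s_c})}=\infty\bigr\},
\]
where $u^{\phi}$ denotes the solution to \eqref{eqinls} with data $\phi$; by the scaling \eqref{def:scaling} the mass can always be normalized to $1$. Failure of Theorem~\ref{T2} makes this set non-empty. To see $\delta_c>0$, note that in the defocusing case $E[\phi]\geq\tfrac12\|\nabla\phi\|_{L^2}^2$, so small $E[\phi]^{s_c}$ together with $M[\phi]=1$ forces small $\|\phi\|_{H^1}$; Strichartz then gives small $\|e^{it\Delta}\phi\|_{S(\dot{H}^{s_c})}$, and the small-data item (2) of Theorem~\ref{T1} forces $\|u^{\phi}\|_{S(\dot{H}^{s_c})}<\infty$, a contradiction.

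\emph{Step 2 (Profile decomposition and nonlinear profiles).} I take a minimizing sequence $\phi_n$ and apply Lemma~\ref{lem lpdo} to write
\[
\phi_n = \sum_{j=1}^{M} e^{-it_n^j\Delta}\Psi_n^j + W_n^M.
\]
To each profile I associate a nonlinear profile $v^j_n$. When $x_n^j$ remains bounded (taken $\equiv 0$), if $t_n^j\equiv 0$ I use local well-posedness from Theorem~\ref{T1}(1), while if $t_n^j\to\pm\infty$ I use the wave-operator construction of Theorem~\ref{T1}(4) and its time-reversed analogue. When $x_n^j\to\infty$, I invoke the Miao--Murphy--Zheng device: the antisymmetric pair $\psi^j(x-x_n^j)-\psi^j(-x-x_n^j)$ is supported far from the origin, where $|x|^{-b}$ is small, so the true nonlinear flow is approximated, in appropriate Strichartz norms, by the free Schr\"odinger evolution, which automatically has finite $S(\dot{H}^{s_c})$ norm.

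\emph{Step 3 (Reducing to a single profile and identifying $u_c$).} I form $\tilde u_n := \sum_{j=1}^M v^j_n + e^{it\Delta}W_n^M$ and argue by contradiction. The mass/energy decoupling \eqref{m-e do} and the Pythagorean expansion in Lemma~\ref{lem lpdo}(5) show that if two or more profiles were nontrivial, each would satisfy $M[v^j_n]<1$ and $E[v^j_n]^{s_c}<\delta_c$ strictly up to $o_n(1)$; after rescaling to unit mass, minimality of $\delta_c$ yields $\|v^j\|_{S(\dot{H}^{s_c})}<\infty$, and the same holds for any profile arising from the $x_n^j\to\infty$ case by Step~2. Asymptotic orthogonality \eqref{eq orth} together with \eqref{eq vanish rem} and the nonlinear estimates of Lemma~\ref{lem nlest} then provide a uniform bound for $\|\tilde u_n\|_{S(\dot{H}^{s_c})}$ and an error term small in the relevant dual Strichartz spaces; the long-time perturbation Lemma~\ref{prop sta} upgrades this to $\|u^{\phi_n}\|_{S(\dot{H}^{s_c})}<\infty$, contradicting the choice of $\phi_n$. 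Hence exactly one profile is nontrivial and $W_n^M\to 0$ in $H^1$; passing to the nonlinear object yields $u_c$, and (1)--(3) follow from the decoupling identities and the construction of $v^1_n$.

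\emph{Step 4 (Pre-compactness and main obstacle).} For the orbit, I repeat the analysis with the bounded $H^1$-sequence $\{u_c(t_n)\}$ for an arbitrary $t_n\subset\R$; minimality again forces a single profile with vanishing remainder, and the parameters $t_n^1, x_n^1$ must remain bounded --- otherwise either the $t_n^1\to\pm\infty$ case or the far-from-origin free-evolution case would force $u_c$ to scatter, contradicting~(3) --- so $u_c(t_n)$ converges in $H^1$ along a subsequence. I expect the main obstacle to be Step~2 for profiles with $x_n^j\to\infty$: since $|x|^{-b}$ breaks translation invariance, the nonlinear profile cannot be produced by mere translation of a fixed solution, and the MMZ argument must be adapted to the antisymmetric pair structure, using that the two halves separate to $\pm\infty$ so that their nonlinear interaction becomes negligible and each can separately be approximated by a free Schr\"odinger evolution. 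Verifying the relevant error estimates through Lemma~\ref{lem nlest} --- whose proof crucially uses the odd Hardy inequality (Lemma~\ref{Lem Hardy}) to allow the full range $0<b<1$ --- is where the bulk of the technical work lies.
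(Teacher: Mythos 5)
Your proposal follows essentially the same route as the paper: the critical threshold is defined via the scale-invariant quantity $M^{1-s_c}E^{s_c}$ (your mass-normalized infimum is equivalent to the paper's $L(\delta)$ formulation), the linear profile decomposition of Lemma \ref{lem lpdo} is combined with nonlinear profiles built in the same three cases (small-data/below-threshold, wave operator, and the Miao--Murphy--Zheng far-from-origin construction, which the paper isolates as Proposition \ref{prop far}), and the long-time perturbation Lemma \ref{prop sta} reduces to a single profile with vanishing remainder, with pre-compactness obtained by rerunning the argument on $u_c(t_n)$. The only slip is cosmetic: with $M[\phi]=1$ the $H^1$ norm is bounded rather than small, and what the small-data step actually needs is smallness of $\|\phi\|_{\dot H^{s_c}}\leq\|\phi\|_{L^2}^{1-s_c}\|\nabla\phi\|_{L^2}^{s_c}$ together with the bounded $H^1$ norm, exactly as in Theorem \ref{T1}(2).
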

\begin{proof} 
The idea of this proof can be found, for example, in \cite{CFGM2022,Visan2014} in the context of focusing INLS or defocusing energy-critical NLS in high dimensions. For the reader’s convenience, we provide the details of the adaptation to our case. Firstly, we show the existence of the critical threshold $\delta_c$. We define
\begin{equation}\label{eq L}
        L(\delta):=\sup\left\{\|u\|_{S(\dot{H}^{s_c})}:u~\text{is an odd solution to \eqref{eqinls} such that}~ M[u]^{1-s_c}E[u]^{s_c}\leq\delta\right\}.
\end{equation}
According to Theorem \ref{T1} (2), there exists a unique critical threshold $\delta_c\in(0,\infty]$ such that $L(\delta)\!<\!\infty$ for $\delta\!<\!\delta_c$, and $L(\delta)\!=\!\infty$ for $\delta\geq\delta_c$.  If Theorem \ref{T2} fails, it follows that $0\!<\!\delta_c\!<\!\infty$.

On the other hand, according to \cite{Farah2016}, for any $u_0\in H^1(\R)$, the corresponding solution $u$ to \eqref{eqinls} is always global and uniformly bounded in $H^1(\R)$.  Therefore, we can take a sequence of odd solutions $u_n$ to \eqref{eqinls}, each with odd initial data $u_{n,0}\in H^1(\R)$ that satisfy
\begin{align}
&\|u_{n,0}\|_{L^2}=1, \label{eq am}\\
&E[u_n]^{s_c}\rightarrow\delta_c~~\text{as}~~n\rightarrow\infty, \label{eq ae} \\
&\lim_{n\rightarrow\infty}\|u_n\|_{S(\R^+,\dot{H}^{s_c})}=\lim_{n\rightarrow\infty}\|u_n\|_{S(\R^-,\dot{H}^{s_c})}=\infty. \label{eq abup}
\end{align}

From Lemma \ref{lem lpdo}, we have
\begin{equation}\label{und}
        u_{n,0}(x)=\sum^M_{j=1}e^{-it^j_n\Delta}\Psi^j_n(x)+W^M_n(x),
    \end{equation}
where $\Psi^j_n(x)=\psi^j(x-x^j_n)-\psi^j(-x-x^j_n)$ and the support of $\psi^j$ is contained in $[0,+\infty)$. We will now prove the following assertions:
\begin{itemize}
        \item[(a)] There is a single profile $\Psi^1_n(x)=\psi^1(x-x^1_n)-\psi^1(-x-x^1_n)$ in the decomposition \eqref{und}, i.e. $\Psi^j_n,\psi^j\equiv0$ for $j\geq2$.
        \item[(b)] The time-space translation parameters $(t_n,x_n)\equiv(0,0)$.
        \item[(c)] The remainders $W_n\rightarrow0$ in $H^1(\R)$.
\end{itemize}

If (a)-(c) hold, then the sequence $u_{n,0}$ has a strong convergence limit $u_{c,0}$, up to a subsequence.  By strong convergence and Lemma \ref{prop sta},  $u_{c,0}$ and the corresponding solution $u_c$ satisfy properties (1)-(3).

We now turn to prove (a)-(c) through three steps.

\textbf{Step 1.} To prove (a), we proceed by contradiction, assuming that more than one profile appears in the decomposition \eqref{und}. It is important to note that there must be at least one profile; otherwise, by Lemma \ref{prop sta}, the approximate solution $e^{it\Delta}W^M_n$ would lead to global space-time bounds for the solutions $u_n$, which would contradict \ref{eq abup}. According to Lemma \ref{lem lpdo}, along with \eqref{eq am} and \eqref{eq ae}, we can deduce that each profile satisfies 
\begin{align*}
    M[\Psi^j_n]^{1-s_c}E[e^{-it^j_n}\Psi^j_n]^{s_c}<\delta_c~~\text{for all $j$ and all large $n$}.
\end{align*}
This allows us to construct scattering solutions $v^j_n$ to \eqref{eqinls} corresponding to each profile. The function $v^j_n$ is constructed according to three different cases.

{\bf Case 1.} If $x^j_n\equiv0$ and $t^j_n\equiv0$, then the solution $v^j$ to \eqref{eqinls} with initial data $\Psi^j$ scatters, as it lies below the critical threshold $\delta_c$.

{\bf Case 2.} If $x^j_n\equiv0$ and $t^j_n\rightarrow\pm\infty$, we define $v^j$  as the solution to \eqref{eqinls} that scatters to $e^{it\Delta}\Psi^j$ as $t\rightarrow\pm\infty$.
The existence of $v^j$ is ensured by the existence of the wave operator stated in conclusion (4) of Theorem \ref{T1}.

In Case 1 and  2, since $x^j_n\equiv0$, meaning that $\Psi^j_n$ is independent of $n$, we can replace $\Psi^j_n$ with $\Psi^j$. Consequently, we define $v^j_n:=v^j(t+t^j_n,x)$.

{\bf Case 3.} If $|x^j_n|\rightarrow+\infty$, we construct a global scattering solution $v^j_n$ to \eqref{eqinls} with initial data given by $v^j_n(0,x)=e^{-it^j_n\Delta}\Psi^j_n$. The global and scattering properties of this solution will be established in Proposition \ref{prop far}.

We define a sequence as following:
\begin{align*}
        \tilde{u}^M_n:=\sum^M_{j=1} v^j_n,
\end{align*}
which serves as an approximate solution for $u_n$ to the equation \eqref{eqinls}.

We give the following claims:

{\bf Claim 1.} Asymptotic close between initial data:
 \begin{align*}
            \lim_{M\rightarrow\infty}\left[\lim_{n\rightarrow\infty}\|e^{it\Delta}(u_n(0)-\tilde{u}^M_n(0))\|_{S(\dot{H}^{s_c})}\right]=0.
        \end{align*}

{\bf Claim 2.} $\tilde{u}^M_n$ are approximate solutions to \eqref{eqinls}:   $\tilde{u}^M_n$ satisfy
\begin{equation*}
    (i\partial_t+\Delta)\tilde{u}^M_n-|x|^{-b}|\tilde{u}^M_n|^{\alpha}\tilde{u}^M_n=e^M_n,
\end{equation*}
where $e^M_n=|x|^{-b}(\sum^M_{j=1}(|v^j_n|^\alpha v^j_n)-|\sum^M_{j=1}v^j_n|^{\alpha}\sum^M_{j=1}v^j_n)$. Moreover for each $M$ and $\epsilon>0$, there exists $n_0=n_0(M,\epsilon)$ such that for any $n>n_0$,
\begin{equation}\label{eq small}
\|e^M_n\|_{S'(\dot{H}^{-s_c})}+\|e^M_n\|_{S'(L^2)}+\|\nabla e^M_n\|_{S'(L^2)}\leq\epsilon.
\end{equation}

{\bf Claim 3.} Uniform time-space bounds for $\tilde{u}^M_n$: There exist constants $L$, $S>0$ that are independent of $M$, and for each $M$ there exists $n_1=n_1(M)$ such that for any $n>n_1$,
\begin{align*}
    \|\tilde{u}^M_n\|_{S(\dot{H}^{s_c})}\leq L~\text{and}~\|\tilde{u}^M_n\|_{L^\infty_tH^1_x}\leq S.
\end{align*}

If these three claims are valid, we can conclude that the solutions $u_n$  possess time-space bounds. This is because, according to Lemma \ref{prop sta}, the solutions $u_n$  inherit the bounds of the approximate solutions $\tilde{u}^M_n$ when both $M$ and $n$ are sufficiently large. This leads to a contradiction to \eqref{eq abup}. Therefore, assertion $(a)$ can be established.

We now provide the proofs for the three claims.

{\bf Proof of Claim 1.} We begin with the expression for the difference between the initial data:
\begin{align*}
          u_n(0,x)-\tilde{u}^M_n(0,x)=\sum^M_{j=1}\left[e^{-it^j_n\Delta}\Psi^j_n(x)-v^j_n(0,x)\right]+W^M_n.
      \end{align*}
Observe that each term in the sum is either zero or converges to zero in $H^1(\R)$. Applying the interpolation inequality and
and \eqref{eq vanish rem}, we can conclude Claim 1.

{\bf Proof of Claim 2.} By the construction, $\tilde{u}^M_n$ satisfies the following approximate equation
\begin{equation*}
    (i\partial_t+\Delta)\tilde{u}^M_n-|x|^{-b}|\tilde{u}^M_n|^{\alpha}\tilde{u}^M_n=e^M_n.
\end{equation*}
Now, let's verify the condition \eqref{eq small}. By using some elementary inequalities, we have
\begin{align*}
    |e^M_n|\lesssim|x|^{-b}\left|\sum_{1<j\neq k<M}|v^k_n|^\alpha|v^j_n|\right|,
\end{align*}
and for the gradient,
\begin{align*}
    |\nabla e^M_n|\lesssim|x|^{-b-1}\left|\sum_{1\leq j\neq k\leq M}|v^k_n|^\alpha|v^j_n|\right|+|x|^{-b}\left|\sum_{1\leq j\neq k\leq M}|v^k_n|(|v^k_n|^{\alpha-1}+|v^j_n|^{\alpha-1})|\nabla v^j_n|\right|.
\end{align*}
These imply:
\begin{align*}
    \left\|e^M_n\right\|_{S'(\dot{H}^{-s_c})}\lesssim&\sum_{1\leq j\neq k\leq M}\left\||x|^{-b}|v^k_n|^\alpha|v^j_n|\right\|_{L^{\tilde{a}'}_tL^{\hat{r}'}_x},\\
   \left\|e^M_n\right\|_{S'(L^2)}\lesssim&\sum_{1\leq j\neq k\leq M}\left\||x|^{-b}|v^k_n|^\alpha|v^j_n|\right\|_{L^{\hat{q}'}_tL^{\hat{r}'}_x},\\
   \left\|\nabla e^M_n\right\|_{S'(L^2)}\lesssim&\sum_{1\leq j\neq k\leq M}\left\||x|^{-b-1}|v^k_n|^\alpha|v^j_n|\right\|_{L^{\hat{q}'}_tL^{\hat{r}'}_x}\\
   &+\sum_{1\leq j\neq k\leq M}\left\||x|^{-b}|v^k_n|(|v^k_n|^{\alpha-1}+|v^j_n|^{\alpha-1})|\nabla v^j_n|\right\|_{L^{\hat{q}'}_tL^{\hat{r}'}_x}.
\end{align*}
Here $\tilde{a},\hat{q},\hat{r}$ are the same parameters as in the proof of Lemma \ref{lem nlest}.

By the construction of $v^j_n$ outlined in Cases 1-3, each $v^j_n\in S(\dot{H}^{s_c})$ can be approximated by functions in $C^\infty_c(\R\times\R)$. Utilizing the orthogonality of $v^j_n$ presented by  \eqref{eq orth}, it follows that all summands converge to zero as $n\rightarrow\infty$. Thus, \eqref{eq small} holds, confirming Claim 2.

{\bf Proof of Claim 3.} %We start by analyzing the behavior of $\tilde{u}^M_n$. 
By the definition of the energy $E$ (see \eqref{E}), we derive:
\begin{align*}
  \limsup_{n\rightarrow\infty}\sum^M_{j=1}\|v^j_n\|^2_{L^\infty_tH^1_x}\lesssim\limsup_{n\rightarrow\infty}
  \sum^M_{j=1}\left(M[v^j_n]+E[v^j_n]\right)\lesssim 1.
\end{align*}
The orthogonality condition \eqref{eq orth} implies that for $j\neq k$
\begin{align*}
    \sup_{t\in\R}\left|(v^j_n,v^k_n)_{H^1}\right|\rightarrow0\quad \text{as}\quad n\rightarrow\infty,
\end{align*}
as shown in \cite[Corollary 4.4]{FXC2011}. With the construction of $\tilde{u}^M_n$, there exists a constant $S>0$, independent of $M$, such that
\begin{align}\label{eq S}
    \sup_{t\in\R}\|\tilde{u}^M_n\|^2_{H^1}\leq S\quad\text{for all}\quad n>n_1(M).
\end{align}
Next, we establish the bound for the time-space norm of $\tilde{u}^M_n$. Consider
\begin{align*}
\|\tilde{u}^M_n\|^2_{L^{\hat{a}}_tL^{\hat{r}}_x}\leq\sum^{M_0}_{j=1}\|v^j_n\|^2_{L^{\hat{a}}_tL^{\hat{r}}_x}
+\sum^{M}_{j=M_0+1}\|v^j_n\|^2_{L^{\hat{a}}_tL^{\hat{r}}_x}+\sum_{j\neq k}\|v^j_nv^k_n\|_{L^{\frac{\hat{a}}{2}}_tL^{\frac{\hat{r}}{2}}_x},
\end{align*}
where $(\hat{a},\hat{r})\in \mathcal{A}_{s_c}$ is given in Lemma \ref{lem nlest}.

Using \eqref{eq am}, \eqref{eq ae}, and \eqref{m-e do}, we find that there exists $C>0$ such that
\begin{align*}
    \sum^\infty_{j=1}\|\Psi^j_n\|^2_{H^1}\leq C,
\end{align*}
for sufficiently large $n$. This implies that for some small $\delta\in(0,1)$ and sufficiently large $n$, there exists $M_0(\delta)>0$ such that
\begin{align*}
\sum^\infty_{j=M_0(\delta)}\|\Psi^j_n\|^2_{H^1}\leq \delta.
\end{align*}
From Lemma \ref{str est} and Theorem \ref{T1}(2), we derive
\begin{align*}
\sum^M_{j=M_0(\delta)}\|v^j_n\|^2_{L^{\hat{a}}_tL^{\hat{r}}_x}\leq 4\delta \quad\text{for} \quad n>n_1(M).
\end{align*}
The orthogonality condition \eqref{eq orth} ensures
\begin{align*}
\|\tilde{u}^M_n\|^2_{L^{\hat{a}}_tL^{\hat{r}}_x}\leq M_0(\delta)L_1+4\delta+o(1)\leq L^2_2\quad \text{for}\quad n>n_1(M),
\end{align*}
where $L_1$, $L_2>0$ are independent of $M$ (the construction of $v^j_n$ and \eqref{eq L} confirm the existence of $L_1$).

By applying Lemma \ref{lem nlest} and \eqref{eq S}, it follows that
\begin{align*}
    \left\||x|^{-b}|\tilde{u}^M_n|^\alpha\tilde{u}^M_n\right\|_{L^{\tilde{a}'}_tL^{\hat{r}'}_x}\lesssim S^\theta L_2^{\alpha+1-\theta}.
\end{align*}
Moreover, using \eqref{eq small}, \eqref{eq S}, and Lemma \ref{str est}, we have,  for  $n>n_1(M)$, the following inequality holds:
\begin{align*}
    \|\tilde{u}^M_n\|_{S(\dot{H}^{s_c})}&\leq C\|\tilde{u}^M_n(0)\|_{H^1}+C\left\||x|^{-b}|\tilde{u}^M_n|^\alpha\tilde{u}^M_n\right\|_{L^{\tilde{a}'}_tL^{\hat{r}'}_x}+\|e^M_n\|_{S'(\dot{H}^{-s_c})}\\
    &\leq CS+CS^\theta L^{\alpha+1-\theta}_2+\epsilon\leq L.
\end{align*}
Thus, we complete the proof of Claim 3.

\textbf{Step 2.} To prove (b), we start by considering the decomposition provided in (a) and the expression \eqref{und}. We rewrite the initial data as
$$u_{n,0}(x)=e^{-it_n\Delta}\Psi_n(x)+W_n(x).$$
If $|x_n|\rightarrow\infty$, the solutions $v_n$ to \eqref{eqinls} with initial data $v_n(0,x)=e^{-it_n\Delta}\Psi_n(x)$ are global and scattering, as given by Proposition \ref{prop far}.  According to Lemma \ref{prop sta}, this allows us to derive uniform time-space bounds for the solutions $u_n$. However, this conclusion contradicts \eqref{eq abup}, which suggests unboundedness norm.
If $|x_n|\equiv0$, $|t_n|\rightarrow\infty$, we use the dispersive estimate to deduce that $e^{-it_n\Delta}\Psi(x)\rightharpoonup0$ in $H^1(\R)$. This weak convergence indicates that the functions $v_n(t)=e^{i(t-t_n)\Delta}\Psi$ are well-behaved approximate solutions for large $n$. Again, applying Lemma \ref{prop sta}, we can obtain uniform time-space bounds for $u_n$, which once more contradicts \eqref{eq abup}. In both cases, we arrive at contradiction with \eqref{eq abup}. This contradiction confirms that (b) holds.

\textbf{Step 3.} To prove (c), it is derived from \eqref{eq am}, \eqref{eq ae}, \eqref{eq abup} and (a) that there is a single profile $\Psi$ satisfying $M[\Psi]=1$ and $E[\Psi]^{s_c}=\delta_c$.  Consequently, we have $W_n\rightarrow0$ in $H^1(\R)$. This means that (c) holds.

Finally, we show that the orbit $\{u_c(t):~t\in\R\}$ is pre-compact in $H^1(\R)$. We replace $u_{n,0}$ with $\phi_n:=u_c(t_n)$, where $\{t_n\}\subseteq(-\infty,+\infty)$ is a sequence converging to some $t^*\in [-\infty,+\infty]$ (up to a subsequence), so that $\phi_n$ still satisfy \eqref{eq am}, \eqref{eq ae} and \eqref{eq abup}. By repeating the proof steps above, we find that $\{\phi_n\}$ has a subsequence that converges strongly in $H^1$. This convergence holds even when $t^*=\infty$.

Thus, we complete the proof of Proposition \ref{prop c}.
\end{proof}

It remains to prove that the solutions to \eqref{eqinls} with initial profiles living far from the origin are global in time and scattering. This approach builds on the method introduced in \cite{MMZ2021} and \cite{CFGM2022}. Our goal is to extend their results to the case where  $N=1$, with the oddness condition.

\begin{prop}\label{prop far}
Suppose $0<b<1 $ and $4-2b<\alpha<\infty$. Let $\psi\in H^1(\R)$ with the support contained in $[0,+\infty)$, and define $\Psi_n(x)=\psi(x-x_n)-\psi(-x-x_n)$. Assume $t_n\equiv0$ or $t_n\rightarrow\pm\infty$ and $x_n\rightarrow+\infty$.  Then, for all sufficiently large $n$, there exist global solutions $v_n$ to \eqref{eqinls} with initial data
\begin{align*}
        v_n(0)=e^{-it_n\Delta}\Psi_n
\end{align*}
that scatter in $H^1(\R)$ and satisfy
\begin{align*}
        \|v_n\|_{S(\dot{H}^{s_c})}+\|v_n\|_{S(L^2)}+\|\nabla v_n\|_{S(L^2)}\leq C,
    \end{align*}
for some constant $C=C(\|\psi\|_{H^1})$.  Furthermore, for any $\epsilon>0$, there exist $K>0$ and $\Phi_n(t,x)$  such that
\begin{align*}
        \|v_n-\Phi_n\|_{S(\dot{H}^{s_c})}<\epsilon\quad\text{for}\quad n>K,
    \end{align*}
where $\Phi_n(t,x):=\phi(t-t_n,x-x_n)-\phi(t-t_n,-x-x_n)$ and $\phi\in C_c^\infty(\R\times\R)$.
\end{prop}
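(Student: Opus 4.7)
The plan is to build an approximate solution to \eqref{eqinls} from the free linear evolution of the initial data and then invoke the long-time perturbation result (Lemma~\ref{prop sta}); the nonlinearity will be small because $x_n\to\infty$ pushes the support of the waves away from the singularity at the origin. Let $\tilde\psi(s,y):=(e^{is\Delta}\psi)(y)$ and define the approximate solution
\[
\tilde v_n(t,x):=e^{i(t-t_n)\Delta}\Psi_n(x)=\tilde\psi(t-t_n,x-x_n)-\tilde\psi(t-t_n,-x-x_n),
\]
using that $e^{it\Delta}$ commutes with translations and reflections. Then $\tilde v_n(0)=v_n(0)$ and $(i\partial_t+\Delta)\tilde v_n=0$, so substituting $\tilde v_n$ into \eqref{eqinls} yields the forcing $e_n:=-|x|^{-b}|\tilde v_n|^\alpha\tilde v_n$. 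Since $\psi$ is supported in $[0,\infty)$ and $x_n\to+\infty$, the two pieces of $\Psi_n$ sit in disjoint half-lines for $n$ large, so $\|\tilde v_n\|_{L^\infty_tH^1_x}\leq\sqrt 2\,\|\psi\|_{H^1}$, while Lemma~\ref{str est} supplies $\|\tilde v_n\|_{S(\dot H^{s_c})}+\|\tilde v_n\|_{S(L^2)}+\|\nabla\tilde v_n\|_{S(L^2)}\lesssim\|\psi\|_{H^1}$.

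Fix $\epsilon>0$ and, using density of $C_c^\infty(\R\times\R)$ in the Strichartz spaces containing $\tilde\psi$, choose $\phi\in C_c^\infty(\R\times\R)$ with
\[
\|\tilde\psi-\phi\|_{S(\dot H^{s_c})}+\|\tilde\psi-\phi\|_{S(L^2)}+\|\nabla(\tilde\psi-\phi)\|_{S(L^2)}<\epsilon.
\]
Put $\Phi_n(t,x):=\phi(t-t_n,x-x_n)-\phi(t-t_n,-x-x_n)$ and decompose $\tilde v_n=\Phi_n+r_n$; translation invariance gives $\|r_n\|_{S(\dot H^{s_c})}+\|r_n\|_{S(L^2)}+\|\nabla r_n\|_{S(L^2)}\lesssim\epsilon$. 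For $n$ large enough that $x_n$ exceeds twice the spatial radius of the support of $\phi$, the support of $\Phi_n$ lies in $\{|x|\geq c\,x_n\}$ for some $c>0$, so $|x|^{-b}\lesssim x_n^{-b}$ and $|x|^{-b-1}\lesssim x_n^{-b-1}$ there. Since $\phi$ is smooth and compactly supported, $\|\,|\phi|^\alpha\phi\|_{S'(L^2)\cap S'(\dot H^{-s_c})}$ and $\|\nabla(|\phi|^\alpha\phi)\|_{S'(L^2)}$ are finite constants depending only on $\phi$, so the principal piece $|x|^{-b}|\Phi_n|^\alpha\Phi_n$ and its gradient are bounded by $x_n^{-b}$ times a constant in the relevant dual Strichartz norms. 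The multilinear remainder $|x|^{-b}|\tilde v_n|^\alpha\tilde v_n-|x|^{-b}|\Phi_n|^\alpha\Phi_n$ contains at least one factor of $r_n$ and, by Lemma~\ref{lem nlest} and the variant \eqref{rem nlest}, is bounded by $C(\|\psi\|_{H^1})\epsilon$. Choosing $\epsilon$ small and then $n$ large drives
\[
\|e_n\|_{S'(L^2)}+\|\nabla e_n\|_{S'(L^2)}+\|e_n\|_{S'(\dot H^{-s_c})}
\]
below any prescribed threshold.

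Since $\tilde v_n(0)=v_n(0)$, the discrepancy $\|e^{it\Delta}(v_n(0)-\tilde v_n(0))\|_{S(\dot H^{s_c})}$ vanishes, so Lemma~\ref{prop sta} applied with $M\asymp L\asymp\|\psi\|_{H^1}$ and $M'=0$ produces a unique global solution $v_n$ to \eqref{eqinls} with initial data $v_n(0)$ satisfying $\|v_n-\tilde v_n\|_{S(\dot H^{s_c})}$ arbitrarily small and $\|v_n\|_{S(\dot H^{s_c})}+\|v_n\|_{S(L^2)}+\|\nabla v_n\|_{S(L^2)}\lesssim_{\|\psi\|_{H^1}}1$. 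Mass and energy conservation for the defocusing equation yield the uniform $L^\infty_tH^1_x$ bound, so Theorem~\ref{T1}(3) delivers scattering in $H^1(\R)$. The final approximation statement follows from the triangle inequality $\|v_n-\Phi_n\|_{S(\dot H^{s_c})}\leq\|v_n-\tilde v_n\|_{S(\dot H^{s_c})}+\|r_n\|_{S(\dot H^{s_c})}$.

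I expect the technically delicate point to be the gradient control of the forcing, where differentiating $|x|^{-b}$ produces the more singular weight $|x|^{-b-1}$: on the $\Phi_n$-piece this is absorbed by the support-localization bound $|x|^{-b-1}\lesssim x_n^{-b-1}$, while on the $r_n$-piece one exploits oddness of $\tilde v_n$ (inherited from the odd extension defining $\Psi_n$) together with the Hardy inequality of Lemma~\ref{Lem Hardy} to trade $|x|^{-1}$ for $|\nabla|$, exactly as in the proof of \eqref{est:line:3}. A secondary bookkeeping issue is that the two regimes $t_n\equiv 0$ and $t_n\to\pm\infty$ must both be handled, but they are unified by the time-translation structure of $\tilde v_n$, provided the Strichartz approximation of $\tilde\psi$ by $\phi$ is chosen globally in time.
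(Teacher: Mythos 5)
Your construction is genuinely different from the paper's: you take the exact free evolution $\tilde v_n=e^{i(t-t_n)\Delta}\Psi_n$ as the approximate solution on all of $\R_t$, so the error is purely the nonlinearity, and you kill it by approximating the profile with a compactly supported space--time bump $\Phi_n$ (whose support is pushed to $\{|x|\gtrsim x_n\}$) plus a Strichartz-small remainder $r_n$. The paper instead multiplies $e^{it\Delta}P_n\psi$ by a spatial cutoff $\chi_n$ vanishing near the origin and a Littlewood--Paley projection $P_n=P_{\leq|x_n|^{\theta}}$, works only on $|t|\leq T$, and glues on free evolutions for $|t|>T$; this makes the nonlinear error trivially small for $|t|\leq T$ but forces it to estimate commutator terms $\Delta\chi_n\cdot e^{it\Delta}P_n\psi$, $\nabla\chi_n\cdot\nabla e^{it\Delta}P_n\psi$ (the reason $P_n$ is needed, to control $\|\Delta P_n\psi\|_{L^2}$) and to treat the two time regimes and the cases $t_n\equiv0$ versus $t_n\to\pm\infty$ separately. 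Your route avoids all of that bookkeeping; the price is that $r_n$ is spread over all of space, so every remainder term must be run through the full nonlinear machinery of Lemma~\ref{lem nlest}, \eqref{rem nlest} and the Hardy inequality (using oddness of $\tilde v_n$, $\Phi_n$, $r_n$), exactly as you anticipate in your last paragraph. That part of the plan is sound.

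There is, however, one step that fails as written: you cannot choose $\phi\in C_c^{\infty}(\R\times\R)$ with $\|\tilde\psi-\phi\|_{S(L^2)}+\|\nabla(\tilde\psi-\phi)\|_{S(L^2)}<\epsilon$, because $(\infty,2)\in\mathcal{A}_0$, so $\|\cdot\|_{S(L^2)}\geq\|\cdot\|_{L^\infty_tL^2_x}$, and $\|e^{it\Delta}\psi\|_{L^2_x}\equiv\|\psi\|_{L^2}$ is conserved; for $t$ outside the time-support of $\phi$ the difference has $L^2_x$-norm $\|\psi\|_{L^2}$, hence $\|\tilde\psi-\phi\|_{S(L^2)}\geq\|\psi\|_{L^2}$. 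Consequently the asserted bound $\|r_n\|_{S(L^2)}+\|\nabla r_n\|_{S(L^2)}\lesssim\epsilon$ is false. The repair is standard but must be made explicit: inspecting the proofs of \eqref{est:line:1}--\eqref{est:line:3} and \eqref{rem nlest}, the factor carrying the smallness is only ever measured in the specific pairs $L^{\hat a}_tL^{\hat r}_x$ and $L^{\hat q}_tL^{\hat r}_x$ with $\hat a,\hat q,\hat r<\infty$, so it suffices to choose $\phi$ approximating $\tilde\psi$ (and $\nabla\phi$ approximating $\nabla\tilde\psi$) in those finitely many finite-exponent Lebesgue norms, where density of $C_c^{\infty}$ does hold; the $L^\infty_t$-type components of $r_n$ need only be bounded, which they are. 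With that modification (and the same caveat applied to the final display $\|v_n-\Phi_n\|_{S(\dot H^{s_c})}<\epsilon$, where the pair $(\infty,\tfrac{2}{1-2s_c})$ is handled by the dispersive decay of $\|e^{it\Delta}\psi\|_{L^r_x}$ for $r>2$), your argument closes.
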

\begin{proof}
 For each $n$, define a smooth cut-off function $\chi_n$ satisfying
\begin{align*}
        \chi_n(x)=\left\{\begin{aligned}
           &1,\quad\text{for}\quad|x+x_n|>\frac{|x_n|}{2},\\
        &0,\quad\text{for}\quad|x+x_n|<\frac{|x_n|}{4},
        \end{aligned}
         \right.
    \end{align*}
with $\sup_x|\nabla^\alpha\chi_n(x)|\lesssim|x_n|^{-\alpha}$ for all $\alpha\in\mathbb{N}$. Note that $\chi_n(x)\rightarrow1$ as $n\rightarrow\infty$ for each $x\in\R$ since $x_n\to+\infty$.

Now let us construct approximate solutions to \eqref{eqinls}. For $T>0$, we define
\begin{align*}
        \tilde{v}_{n,T}(t,x)=\left\{\begin{aligned}
            &\chi_n(x\!-\!x_n)e^{it\Delta}P_n\psi(x\!-\!x_n)\!-\!\chi_n(-x\!-\!x_n)e^{it\Delta}P_n\psi(-x\!-\!x_n),~&for~|t|\leq T,\\
            &e^{i(t-T)\Delta}[\tilde{v}_{n,T}(T)], &for~ t>T,\\
            &e^{i(t+T)\Delta}[\tilde{v}_{n,T}(-T)], &for~ t<-T,
        \end{aligned}\right.
\end{align*}
where $P_n:=P_{\leq|x_n|^\theta}$ are the Littlewood-Paley projections for some small $0<\theta<1$. We will show that $\tilde{v}_{n,T}$ are approximate solutions of $v_n$ by Lemma \ref{prop sta}.  We need the following estimates:
\begin{itemize}

\item[$i$).] $\lim_{T\rightarrow\infty}\limsup_{n\rightarrow\infty}\|\tilde{v}_{n,T}(-t_n)-v_n(0)\|_{H^1}=0$.

\item[$ii$).] $\limsup_{T\rightarrow\infty}\limsup_{n\rightarrow\infty}\left(\|\tilde{v}_{n,T}\|_{L^\infty_tH^1_x}
+\|\tilde{v}_{n,T}\|_{S(\dot{H}^{s_c})}\right)\lesssim 1.$

\item[$iii$).] $\lim_{T\rightarrow\infty}\limsup_{n\rightarrow\infty}\left(\|e_{n,T}\|_{S'(\dot{H}^{-s_c})}+\|e_{n,T}\|_{S'(L^2)}+\|\nabla e_{n,T}\|_{S'(L^2)}\right)=0,$\\
where $e_{n,T}=(i\partial_t+\Delta)\tilde{v}_{n,T}-|x|^{-b}|\tilde{v}_{n,T}|^\alpha \tilde{v}_{n,T}.$
\end{itemize}

To establish these estimates, we will frequently rely on several key facts: the unitarity of $e^{i\cdot\Delta}$ as an operator on $L^2$, the commutation property between the operators $\nabla$ and $e^{i\cdot\Delta}$, as well as  the embedding $H^1\subset\dot{H}^{s_c}$. For the sake of brevity, we may occasionally skip explicitly mentioning these points in the proofs. In what follows, let us proceed to prove the three assertions.

{\textbf{Proof of $i$)}.} If $t_n\equiv0$, we have
\begin{align*}
\begin{split}
            \tilde{v}_{n,T}(-t_n)-v_n(0)=&(\chi_n(x-x_n)P_n\psi(x-x_n)-\psi(x-x_n))\\
           &-(\chi_n(-x-x_n)P_n\psi(-x-x_n)-\psi(-x-x_n)).
\end{split}
\end{align*}
Using the dominated convergence theorem, it follows that
\begin{align*}
            \|\tilde{v}_{n,T}(-t_n)-v_n(0)\|_{H^1}\leq 2\|\chi_nP_n\psi-\psi\|_{H^1}\rightarrow0~~\text{as}~~n\rightarrow\infty,
        \end{align*}
since $P_n\psi\rightarrow\psi$ strongly in $H^1(\R)$.

Similarly, if $t_n\rightarrow\infty$, we can obtain
\begin{align*}\begin{split}
            \|\tilde{v}_{n,T}(-t_n)-v_n(0)\|_{H^1}&\leq2\|e^{-iT\Delta}\chi_ne^{iT\Delta}P_n\psi-\psi\|_{H^1}=\|\chi_ne^{iT\Delta}P_n\psi-e^{iT\Delta}\psi\|_{H^1}\\
            &\leq2(\|(1-\chi_n)e^{iT\Delta}P_n\psi\|_{H^1}+\|P_n\psi-\psi\|_{H^1})\rightarrow0\quad\text{as}\quad n\rightarrow\infty.
        \end{split}
        \end{align*}
Therefore, estimate $i$) holds true.

{\textbf{Proof of $ii$)}.} First, we consider the case $|t|\leq T$. For sufficiently large $n$, we have
\begin{align*}\begin{split}
            \|\tilde{v}_{n,T}\|_{L^\infty_{(|t|\leq T)}H^1_x}\leq&2\|e^{it\Delta}P_n\psi\|_{L^\infty_tL^2_x}+2\|\nabla\chi_n\|_{L^2}\|e^{it\Delta}P_n\psi\|_{L^\infty_tL^\infty_x}\\
            &+2\|\chi_n\|_{L^\infty}\|\nabla e^{it\Delta}P_n\psi\|_{L^\infty_tL^2_x}\\
            \leq& (2+o(1))\|e^{it\Delta}P_n\psi\|_{L^\infty_tH^1_x}\lesssim\|\psi\|_{H^1}.
            \end{split}
\end{align*}
On the other hand, by Lemma \ref{str est}, we obtain
\begin{align*}
        \|\tilde{v}_{n,T}\|_{S((|t|\leq T),\dot{H}^{s_c})}\lesssim\|e^{it\Delta}P_n\psi\|_{S(\dot{H}^{s_c})}\lesssim\|P_n\psi\|_{\dot{H}^{s_c}}\lesssim\|\psi\|_{H^1}.
    \end{align*}
For the case $|t|>T$, we have
\begin{align*}
            \|\tilde{v}_{n,T}\|_{L^\infty_{(|t|> T)}H^1_x}=\|e^{i(t\mp T)\Delta}[\tilde{v}_{n,T}(\pm T)]\|_{L^\infty_tH^1_x}\leq\|\tilde{v}_{n,T}(\pm T)\|_{H^1}\lesssim\|\psi\|_{H^1},
\end{align*}
and
\begin{align*}
            \|\tilde{v}_{n,T}\|_{S((|t|>T),\dot{H}^{s_c})}\lesssim\|e^{\mp iT\Delta}[\tilde{v}_{n,T}(\pm T)]\|_{\dot{H}^{s_c}}\lesssim\|\tilde{v}_{n,T}(\pm T)\|_{H^1}\lesssim\|\psi\|_{H^1}.
\end{align*}
Combining these estimates, we get $ii$).

{\textbf{Proof of $iii$)}.} If $|t|\leq T$, we have
\begin{align*}
\begin{aligned}
     e_{n,T}=&\left[\Delta[\chi_n(x-x_n)]e^{it\Delta}P_n\psi(x-x_n)-\Delta[\chi_n(-x-x_n)]e^{it\Delta}P_n\psi(-x-x_n)\right]\\
        &+2\left[\nabla[\chi_n(x-x_n)]\nabla e^{it\Delta}P_n\psi(x-x_n)-\nabla[\chi_n(-x-x_n)]\nabla e^{it\Delta}P_n\psi(-x-x_n)\right]\\
         &-|x|^{-b}|\tilde{v}_{n,T}|^\alpha\tilde{v}_{n,T}\\
        :=&E_1+E_2+E_3.
 \end{aligned}
\end{align*}
By simple calculation and using H\"{o}lder inequality,  we obtain
\begin{align*}
            \|E_1\|_{L^1_{(|t|\leq T)}L^2_x}\lesssim T\|\Delta\chi_n\|_{L^\infty}\|e^{it\Delta}P_n\psi\|_{L^\infty_tL^2_x}\lesssim T|x_n|^{-2}\|\psi\|_{L^2}\rightarrow0~~\text{as}~~n\rightarrow\infty,
\end{align*}
and
\begin{align*}
            \|E_2\|_{L^1_{(|t|\leq T)}L^2_x}\lesssim T\|\nabla\chi_n\|_{L^\infty}\|\nabla e^{it\Delta}P_n\psi\|_{L^\infty_tL^2_x}\lesssim T|x_n|^{-1}\|\nabla\psi\|_{L^2}\rightarrow0~~\text{as}~~n\rightarrow\infty.
\end{align*}
It yields from the Bernstein estimate \eqref{eq Bern} that
\begin{align*}
    \begin{split}
            \|\nabla(E_1+E_2)\|_{L^1_{(|t|\leq T)}L^2_x}\lesssim& T(|x_n|^{-3}\|\psi\|_{L^2}+|x_n|^{-2}\|\nabla\psi\|_{L^2})+T|x_n|^{-1}\|\Delta P_n\psi\|_{L^2}\\
            \lesssim&T(|x_n|^{-3}+|x_n|^{-2}+|x_n|^{-1+\theta})\|\psi\|_{H^1}\rightarrow0~~\text{as}~~n\rightarrow\infty.
        \end{split}
\end{align*}
For $\|E_1+E_2\|_{S'((|t|\leq T),\dot{H}^{-s_c})}$, we 
note that for any positive integer $\alpha$, the support of $\nabla^\alpha\chi_n$ are contained in $\{\frac{|x_n|}{4}\leq|x+x_n|\leq\frac{|x_n|}{2}\}$. Therefore, as $n\rightarrow\infty$,
\begin{align*}
\begin{split}
            \|E_1+E_2\|_{L^{q'}_{(|t|\leq T)}L^{r'}_x}\lesssim& T^{\frac{1}{q'}}(\|\Delta\chi_n\|_{L^{\frac{2r}{r-2}}}+\|\nabla\chi_n\|_{L^{\frac{2r}{r-2}}})\|e^{it\Delta}P_n\psi\|_{L^\infty_tH^1_x}\\
            \lesssim&T^{\frac{1}{q'}}|x_n|^{-1+\frac{1}{2}-\frac{1}{r}}\|\psi\|_{H^1}\rightarrow0,
\end{split}
\end{align*}
where $(q,r)\in \mathcal{A}_{-s_c}$.

For the term $E_3$, as $n\rightarrow\infty$,  we have
\begin{align*}\begin{split}
             \|E_3\|_{L^1_{(|t|\leq T)}L^2_x}\lesssim& T\||x|^{-b}\|_{L^\infty_{(|x|>\frac{|x_n|}{4})}}\|e^{it\Delta}P_n\psi\|^{\alpha+1}_{L^\infty_tL^{2(\alpha+1)}_x}\\
             \lesssim& T|x_n|^{-b}\|\psi\|^{\alpha+1}_{H^1}\rightarrow0,
            \end{split}
\end{align*}
and
\begin{align*}
            \begin{split}
               \|\nabla E_3\|_{L^1_{(|t|\leq T)}L^2_x}\lesssim& T\||x|^{-b-1}\|_{L^\infty_{(|x|>\frac{|x_n|}{4})}}\|e^{it\Delta}P_n\psi\|^{\alpha+1}_{L^\infty_tH^1_x}\\
               &+T\||x|^{-b}\|_{L^\infty_{(|x|>\frac{|x_n|}{4})}}\|e^{it\Delta}P_n\psi\|^{\alpha}_{L^\infty_tL^{\infty}_x}\|\nabla e^{it\Delta}P_n\psi\|_{L^\infty_tL^2_x}\\
               \lesssim&T\left(|x_n|^{-b}+|x_n|^{-b-1}\right)\|\psi\|^{\alpha+1}_{H^1}\rightarrow0,
            \end{split}
\end{align*}
furthermore
\begin{align*}\begin{split}
           \|E_3\|_{S'((|t|\leq T),\dot{H}^{-s_c})}\leq  \|E_3\|_{L^{\tilde{q}'}_{(|t|\leq T)}L^2_x}\lesssim& T^\frac{1}{\tilde{q}'}\||x|^{-b}\|_{L^\infty_{(|x|>\frac{|x_n|}{4})}}\|e^{it\Delta}P_n\psi\|^{\alpha+1}_{L^\infty_tL^{2(\alpha+1)}_x}\\
             \lesssim& T^\frac{1}{\tilde{q}'}|x_n|^{-b}\|\psi\|^{\alpha+1}_{H^1}\rightarrow0,
             \end{split}
        \end{align*}
where $\tilde{q}=\frac{2}{s_c}$ so that $(\tilde{q},2)\in\mathcal{A}_{-s_c}$.

If $|t|>T$, we only focus on the region $t>T$, because on the region $t<-T$, the discussion can be addressed similarly. Assuming $t>T$, we have
\begin{align*}
            e_{n,T}=-|x|^{-b}|\tilde{v}_{n,T}|^\alpha\tilde{v}_{n,T},
\end{align*}
and
\begin{align*}
            \tilde{v}_{n,T}(t,x)=e^{i(t-T)\Delta}[\chi_n(x-x_n)e^{iT\Delta}P_n\psi(x-x_n)-\chi_n(-x-x_n)e^{iT\Delta}P_n\psi(-x-x_n)].
\end{align*}
To estimate the time-space norms of $e_{n,T}$, we should use the oddness of both $e_{n,T}$ and $\tilde{v}_{n,T}$ with respect to $x$. By applying Lemma \ref{lem nlest}, we get
\begin{align*}
\begin{split}
            &\|e_{n,T}\|_{S'((t>T),\dot{H}^{-s_c})}+\|e_{n,T}\|_{S'((t>T),L^2)}+\|\nabla e_{n,T}\|_{S'((t>T),L^2)}\\
            \lesssim&\|\tilde{v}_{n,T}\|^\theta_{L^\infty_tH^1_x}\|\tilde{v}_{n,T}\|^{\alpha-\theta}_{S(\dot{H}^{s_c})}\left(\|\tilde{v}_{n,T}\|_{S((t>T),\dot{H}^{s_c})}+\|\tilde{v}_{n,T}\|_{S((t>T),L^2)}+\|\nabla\tilde{v}_{n,T}\|_{S((t>T),L^2)}\right).
\end{split}
\end{align*}
Since both $P_n,~\chi_n$ and $e^{i\cdot\Delta}$ are uniformly bounded operators from $H^1$ to $H^1$, we can conclude that $\|\tilde{v}_{n,T}\|_{L^\infty_tH^1_x}\lesssim\|\psi\|_{H^1}$. To estimate the remainder terms, we fix $(q,r)\in\mathcal{A}_{0}$ and $(\hat{q},\hat{r})\in\mathcal{A}_{s_c}$, so that we have
\begin{align}\label{M:e:3}
\|\nabla\tilde{v}_{n,T}\|_{L^q_{(t>T)}L^r_x}&\lesssim\|\nabla e^{it\Delta}\chi_ne^{iT\Delta}P_n\psi\|_{L^q_{(t>0)}L^r_x}\notag\\
&\lesssim\|\nabla e^{it\Delta}P_n\psi\|_{L^q_{(t>T)}L^r_x}+\|\nabla[(1-\chi_n)e^{iT\Delta}P_n\psi]\|_{L^2},
\end{align}
and
\begin{align}\label{M:e:4}  \|\tilde{v}_{n,T}\|_{L^q_{(t>T)}L^r_x}\lesssim\|e^{it\Delta}P_n\psi\|_{L^q_{(t>T)}L^r_x}+\|(1-\chi_n)e^{iT\Delta}P_n\psi\|_{L^2}
\end{align}
moreover
\begin{align}\label{M:e:5}
\|\tilde{v}_{n,T}\|_{L^{\hat{q}}_{(t>T)}L^{\hat{r}}_x}&\lesssim\|e^{it\Delta}\chi_ne^{iT\Delta}P_n\psi\|_{L^{\hat{q}}_{(t>0)}L^{\hat{r}}_x}\notag\\
            &\lesssim\|e^{it\Delta}P_n\psi\|_{L^{\hat{q}}_{(t>T)}L^{\hat{r}}_x}+\|(1-\chi_n)e^{iT\Delta}P_n\psi\|_{\dot{H}^{s_c}}.
        \end{align}
Using Lemma \ref{str est} again, we  deduce that $\|\nabla e^{it\Delta}P_n\psi\|_{L^q_tL^r_x}\lesssim\|\nabla \psi\|_{L^2}$, $\| e^{it\Delta}P_n\psi\|_{L^q_tL^r_x}\lesssim\|\psi\|_{L^2}$ and $\|e^{it\Delta}P_n\psi\|_{L^{\hat{q}}_tL^{\hat{r}}_x}\lesssim\|\psi\|_{\dot{H}^{s_c}}$.
Consequently, in \eqref{M:e:3}, \eqref{M:e:4}, and \eqref{M:e:5}, the first term approaches zero as $T\rightarrow\infty$, while the second term approaches zero as $n\rightarrow\infty$. Thus, estimate $iii$) holds.

Now, applying Lemma \ref{prop sta}, we conclude that there exists a global solution $v_n$ to  \eqref{eqinls} satisfying $v_n(0)=e^{-it_n\Delta}\Psi_n$. Additionally, for all sufficiently large $n$, we have
$$\|v_n\|_{S(\dot{H}^{s_c})}+\|v_n\|_{S(L^2)}+\|\nabla v_n\|_{S(L^2)}\lesssim 1.$$
Finally, the proof of the approximation aligns completely with that in \cite{MMZ2021}. For further details, we refer to \cite[Proposition 3.3]{MMZ2021}.
\end{proof}

\section{Proof of Theorem \ref{T2}}

In this section, we will complete the proof of Theorem \ref{T2} through a contradiction argument.

% , which is constructed by Proposition \ref{prop c} and Lemma \ref{lem decay}. If Theorem \ref{T2} fails so that, by Proposition \ref{prop c}, there exists a minimal non-scattering solution with the pre-compact orbit $\{u_c(t):~t\in\R\}$ in $H^1$.

\begin{lemma}\label{tightness}
 Let $u$ be a solution to \eqref{eqinls} such that the obit $\{u(t):~t\in\R\}$ is pre-compact in $H^1(\R)$. Then, for any $\varepsilon>0$, there exists $R(\varepsilon)>0$ such that for all $t\in\R$,  the following inequality holds:
\begin{align*}
    \int_{|x|>R}|\nabla u(x,t)|^2+|u(x,t)|^2dx\leq\varepsilon.
\end{align*}
\end{lemma}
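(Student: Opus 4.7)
The plan is to exploit the pre-compactness of the orbit in $H^1(\mathbb{R})$ together with the standard fact that a single $H^1$ function has tails vanishing at infinity. The argument is a textbook finite-cover-then-triangle-inequality computation, but let me record it carefully.

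First I would observe that for any fixed $v \in H^1(\mathbb{R})$, dominated convergence gives
\begin{equation*}
\lim_{R\to\infty} \int_{|x|>R} \bigl(|\nabla v(x)|^2 + |v(x)|^2\bigr)\,dx = 0,
\end{equation*}
and moreover the functional $v \mapsto T_R(v) := \bigl(\int_{|x|>R} |\nabla v|^2 + |v|^2\,dx\bigr)^{1/2}$ is a seminorm (it is the $H^1$ seminorm on the exterior region), hence satisfies the triangle inequality
\begin{equation*}
T_R(u(t)) \leq T_R(u(t_i)) + \|u(t) - u(t_i)\|_{H^1(\mathbb{R})}.
\end{equation*}

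Next I would use the hypothesis that $K := \{u(t) : t \in \mathbb{R}\}$ is pre-compact in $H^1(\mathbb{R})$. Given $\varepsilon > 0$, set $\delta = \tfrac{1}{2}\sqrt{\varepsilon}$. By total boundedness of $\overline{K}$, there exist finitely many times $t_1, \dots, t_N$ such that the balls $B_{H^1}(u(t_i), \delta)$ cover $K$. For each $i$, choose $R_i > 0$ with $T_{R_i}(u(t_i)) < \tfrac{1}{2}\sqrt{\varepsilon}$, and set $R(\varepsilon) := \max_{1 \le i \le N} R_i$.

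Now for any $t \in \mathbb{R}$, pick $i$ with $\|u(t) - u(t_i)\|_{H^1} < \delta$. The triangle inequality above, together with the monotonicity $T_R \le T_{R_i}$ for $R \ge R_i$, yields
\begin{equation*}
T_{R(\varepsilon)}(u(t)) \leq T_{R_i}(u(t_i)) + \|u(t)-u(t_i)\|_{H^1} < \tfrac{1}{2}\sqrt{\varepsilon} + \tfrac{1}{2}\sqrt{\varepsilon} = \sqrt{\varepsilon},
\end{equation*}
so squaring gives $\int_{|x|>R(\varepsilon)} |\nabla u(t)|^2 + |u(t)|^2\,dx \le \varepsilon$, as desired. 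There is no genuine obstacle here: the entire content of the statement is packaged into the pre-compactness hypothesis, which converts the uniform-in-$t$ tail bound into a finite verification at the representatives $u(t_1),\dots,u(t_N)$. The only minor point to be careful about is to formulate the tail as a seminorm so that the triangle inequality can be applied cleanly.
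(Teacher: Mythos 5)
Your proof is correct, and it rests on exactly the same idea as the paper's: pre-compactness of the orbit in $H^1(\R)$ reduces the uniform-in-$t$ tail bound to the tail decay of finitely many (or a limiting) $H^1$ function(s). The only difference is presentational — you give the direct total-boundedness/finite-cover argument with the exterior seminorm $T_R$, while the paper argues by contradiction, extracting a convergent subsequence $u(t_n)\to\varphi$ and contradicting $\varphi\in H^1(\R)$ — so both are the standard compactness argument and equally valid.
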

\begin{proof}
If Lemma \ref{tightness} fails, then there exists  $A>0$ and a sequence $\{t_n\}$ of time,  such that for all $R>0$,
\begin{align*}
             \int_{|x|>R}|\nabla u(x,t_n)|^2+|u(x,t_n)|^2dx\geq A.
\end{align*}
Since the orbit $\{u(t):~t\in\R\}$ is pre-compact, there exists $\varphi\in H^1(\R)$ such that, up to a subsequnce,
\begin{align*}
            u(t_n)\rightarrow\varphi~~\text{in}~~H^1(\R).
\end{align*}
This implies that for all sufficiently large $R>0$,
\begin{align*}
\int_{|x|>R}|\nabla\varphi(x)|^2+|\varphi(x)|^2dx>\frac{A}{2}.
\end{align*}
This leads to a contradiction with the fact that $\varphi\in H^1(\R)$.
\end{proof}

% {\color{red}Now we finish the proof of Theorem \ref{T2} by combining Lemma \ref{lem decay} to show no non-scattering solution.}

\begin{lemma}\label{vanish}
If a global odd solution $u$ to \eqref{eqinls} with initial data $u_0$ satisfies that the orbit $\{u(t):~t\in\R\}$ is pre-compact in $H^1(\R)$, then $u_0\equiv0$.
\end{lemma}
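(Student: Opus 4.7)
The plan is to combine mass conservation with two tools already established in the paper: the tightness of the orbit (Lemma \ref{tightness}) and the interior decay estimate for odd solutions (Lemma \ref{lem decay}). Together, these force the conserved $L^2$ mass to vanish, which yields $u_0 \equiv 0$.

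More precisely, the approach is as follows. Fix an arbitrary $\varepsilon > 0$. By Lemma \ref{tightness}, there exists $R = R(\varepsilon) > 0$ such that
\begin{equation*}
\sup_{t \in \R}\int_{|x|>R} |u(t,x)|^2\, dx \leq \varepsilon.
\end{equation*}
Since $u$ is an odd global $H^1$-solution, Lemma \ref{lem decay} applied to the bounded interval $I = (-R,R)$ gives $\lim_{t\to\infty}\|u(t)\|_{L^2(-R,R)} = 0$. Splitting the $L^2$-norm into an interior and exterior piece,
\begin{equation*}
\|u(t)\|_{L^2(\R)}^2 = \int_{|x|\leq R} |u(t,x)|^2\, dx + \int_{|x|>R} |u(t,x)|^2\, dx,
\end{equation*}
and letting $t \to \infty$, the first term tends to $0$ and the second is at most $\varepsilon$. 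Hence $\limsup_{t\to\infty}\|u(t)\|_{L^2}^2 \leq \varepsilon$.

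Now invoke the mass conservation law \eqref{M}: $\|u(t)\|_{L^2}^2 = M[u_0]$ for every $t \in \R$. Combining this with the previous estimate yields $M[u_0] \leq \varepsilon$, and since $\varepsilon > 0$ is arbitrary, $M[u_0] = 0$, i.e.\ $u_0 \equiv 0$.

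The argument is essentially a rigidity consequence of two nontrivial inputs that have already been proved: the qualitative statement is nearly immediate once one has (i) uniform spatial tightness of the orbit in $H^1$, which is the standard consequence of pre-compactness, and (ii) the interior decay estimate of Lemma \ref{lem decay}, whose proof in \cite{CLZ2024} relies on a Virial--Morawetz identity with the weight $\phi(x) = x/(1+|x|)$ and crucially uses the oddness of $u$. There is no real obstacle in the lemma itself; the content is entirely packaged into the decay estimate, and the splitting/mass-conservation step is a short and routine conclusion.
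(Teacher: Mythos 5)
Your argument is correct and coincides with the paper's own proof: both combine the tightness of the pre-compact orbit (Lemma \ref{tightness}), the interior decay of Lemma \ref{lem decay} on a bounded interval, and mass conservation \eqref{M} to force $M[u_0]=0$. No substantive difference in route or in the key inputs.
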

\begin{proof}
Applying the mass conservation \eqref{M} and Lemma \ref{tightness}, we have 
    \begin{align*}
        \int_\R|u_0|^2dx=\int_\R |u(t)|^2dx=\int_{|x|\leq R}|u(t)|^2dx+\int_{|x|>R}|u(t)|^2dx\leq\int_{|x|\leq R(\varepsilon)}|u(t)|^2dx+\varepsilon.
    \end{align*}
Choosing $\varepsilon$ sufficiently small and $t>T(R(\varepsilon))$ large, by Lemma \ref{lem decay}, we get 
\begin{align*}
        \int_{|x|\leq R(\varepsilon)}|u(t)|^2dx<\varepsilon.
\end{align*}
The existence of $T(R(\varepsilon))$ arises from the fact that, for any bounded space interval $I$, $$\lim_{t\rightarrow\infty}\|u(t)\|_{L^2(I)}\rightarrow0.$$

As a result, we get $\|u_0\|_{L^2}=0$, which implies that $u\equiv0$.
\end{proof}

\begin{proof}[{\bf Proof of Theorem \ref{T2}}]
Proposition \ref{prop c} and Lemma \ref{vanish} show that the non-scattering solution always vanishes, which is obviously impossible. Therefore, the proof of Theorem \ref{T2} is completed.
\end{proof}

% \bibliographystyle{plain}
%\bibliography{ref}
\end{document}